\documentclass[12pt]{amsart}
\usepackage{amssymb}
\usepackage{amsmath, amscd}
\usepackage{amsthm}
\usepackage{amsmath}
\usepackage{comment}
\usepackage[table,xcdraw]{xcolor}
\usepackage{ulem}
\usepackage{tikz}
\usepackage{float}
\usepackage{graphicx}
\usepackage{circuitikz}
\usetikzlibrary{positioning}
\usepackage[colorlinks=true, linkcolor=blue,urlcolor=blue]{hyperref}
\usepackage{nicematrix}

\newtheorem{theorem}{Theorem}[section]

\newtheorem{proposition}[theorem]{Proposition}
\newtheorem{lemma}[theorem]{Lemma}

\newtheorem{corollary}[theorem]{Corollary}
\theoremstyle{definition}

\newtheorem{example}[theorem]{Example}
\newtheorem{definition}[theorem]{Definition}

\newtheorem{problem}[theorem]{Problem}

\date{\today}

\begin{document}
	
\author[P. Danchev]{P. Danchev}
\address{Institute of Mathematics and Informatics, Bulgarian Academy of Sciences, 1113 Sofia, Bulgaria}
\email{danchev@math.bas.bg; pvdanchev@yahoo.com}

\author[O. Hasanzadeh]{O. Hasanzadeh}
\address{Department of Mathematics, Tarbiat Modares University, 14115-111 Tehran Jalal AleAhmad Nasr, Iran}
\email{o.hasanzade@modares.ac.ir; hasanzadeomiid@gmail.com}	
	
\author[A. Moussavi]{A. Moussavi}
\address{Department of Mathematics, Tarbiat Modares University, 14115-111 Tehran Jalal AleAhmad Nasr, Iran}
\email{moussavi.a@modares.ac.ir; moussavi.a@gmail.com}

\author[A. Javan]{A. Javan}
\address{Department of Mathematics, Tarbiat Modares University, 14115-111 Tehran Jalal AleAhmad Nasr, Iran}
\email{a.darajavan@modares.ac.ir; a.darajavan@gmail.com}

\title[$\sqrt{\Delta}$-fine rings]{$\sqrt{\Delta}$-Fine Rings}
\keywords{Fine rings, $\sqrt{\Delta}$-fine rings, Matrix rings, 2-good rings}
\subjclass[2010]{16S34, 16U60, 20C07}

\maketitle

\begin{abstract}
We introduce and study the so-termed {\it $\sqrt{\Delta}$-fine rings}, a new class of rings that generalizes the classical {\it fine rings} introduced by C\u{a}lug\u{a}reanu-Lam in J. Algebra \& Appl. (2016) by requiring that every nonzero element $r \in R$ can be written as $r = u + a$, where $u$ is a unit and $a \in \sqrt{\Delta(R)}$. We establish that every such ring is simple, every abelian $\sqrt{\Delta}$-fine ring is indecomposable, and most notably, the matrix ring $M_n(R)$ over a $\sqrt{\Delta}$-fine ring $R$ is again $\sqrt{\Delta}$-fine for every $n \ge 1$. As a consequence, we characterize all semi-local $\sqrt{\Delta}$-fine rings as those rings which are precisely the simple Artinian rings. We also examine group rings, providing conditions under which they are either $\sqrt{\Delta}$-fine or generalized fine, where the latter class was introduced by Zhou in J. Algebra \& Appl. (2022), and conclude our work with the difficult open question asking of whether each $\sqrt{\Delta}$-fine ring is necessarily fine.
\end{abstract}

\section{Introduction}

Throughout this paper, \( R \) is an arbitrary associative ring with identity which need {\it not} be commutative. We, hereafter, use the usual notations: \( U(R) \) for the group of units, \( \mathrm{Nil}(R) \) for the set of nilpotent elements, \( C(R) \) for the center, \( \mathrm{Id}(R) \) for the set of idempotent elements, and \( J(R) \) for the Jacobson radical. For each positive integer \( n \), we write \( M_n(R) \) for the ring of all \( n \times n \) matrices over \( R \).

As a point of departure of our considerations, note that Nicholson \cite{nic} defined the concept of clean rings which arose from the study of rings with the exchange property: A ring is called {[\it clean} if every element can be written as a sum of a unit and an idempotent. Over the past two decades, this class of rings has attracted considerable attention, along with various generalizations and related notions.

Moreover, Diesl \cite{di} defined the concept of (strongly) nil-clean rings: A ring \( R \) is called {\it nil-clean} if every element \( r \in R \) can be expressed as \( r = b + e \), where \( b \in \mathrm{Nil}(R) \) and \( e \in \mathrm{Id}(R) \). In addition, in the {\it strongly nil-clean} case, the extra condition \( eb = be \) is requested. Several fundamental properties and a comprehensive theory of (strongly) nil-clean rings were established there (compare also with \cite{DL}).

In accordance with C\u{a}lug\u{a}reanu and Lam \cite{Clam}, a nonzero element \( a \) in a ring \( R \) is said to be {\it fine} if it can be written as a unit plus a nilpotent element. A ring \( R \) is then also said to be {\it fine} if each nonzero element is fine. Among the other interesting results there, one of the main outcomes of \cite{Clam} is that any matrix ring over a fine ring is again fine.

Additionally, a new class of rings, named generalized fine rings, was recently introduced by Zhou in \cite{z}: A ring \( R \) is called {\it generalized fine} whenever each element outside its Jacobson radical can be expressed as a sum of a unit and a nilpotent. Again the property of being generalized fine is preserved under taking matrix rings, that is, if \( R \) is a generalized fine ring, then so is \( M_n(R) \) for every positive integer \( n \).

In our further exploration, we shall also make use of the set \(\Delta(R)\), which was first studied by Lam in \cite[Exercise 4.24]{lamm}, and later by Leroy and Matczuk in \cite{lm}. According to \cite[Theorems 3 and 6]{lm}, \(\Delta(R)\) is the largest subring of the Jacobson radical that is closed under multiplication by units (and also by quasi-invertible elements). We thus always have fulfilled \(J(R) \subseteq \Delta(R)\). Moreover, \(\Delta(R)\) equals the Jacobson radical of the subring generated by the units of \(R\), and it coincides with \(J(R)\) precisely when it is a two-sided ideal. Equivalently, an element \(a \in R\) belongs to \(\Delta(R)\) exactly when \(1 - ua\) is a unit for every unit \(u \in U(R)\).

In order of a possible non-trivial generalization, the main tool we use in this paper is the set \( \sqrt{\Delta(R)} \), which was introduced in \cite{DDE} and consists of all elements \( x \in R \) such that \( x^n \in \Delta(R) \) for some \( n \ge 1 \). This set properly contains \( \Delta(R) \), though it need not be a subring of \( R \). In any case, the inclusion \( \mathrm{Nil}(R) \subseteq \sqrt{\Delta(R)} \) always holds.

In the other vein, V\'{a}mos \cite{v2g}defined the so-called {\it \( 2 \)-good rings} that are rings in which each element is a sum of two units.

In the present article, we define a new class of rings by replacing the nilpotents in fine rings with elements from the more general set \( \sqrt{\Delta(R)} \). So, we say that a ring \( R \) is {\it \( \sqrt{\Delta} \)-fine} if every nonzero element can be written as a sum of a unit and an element from \( \sqrt{\Delta(R)} \). This definition appears as a natural expansion of the notion of fine rings.

Our plan to investigate these rings is organized as follows: In Section \ref{sec2}, some fundamental properties of \( \sqrt{\Delta} \)-fine rings are obtained. We shall prove in what follows that each \( \sqrt{\Delta} \)-fine ring is simple (see Proposition \ref{simple}) as well as that any abelian $\sqrt{\Delta}$-fine ring is indecomposable (see Proposition \ref{centralidempotent}). Likewise, for all $\sqrt{\Delta}$-fine rings, we always have \(R = \{1\} \cup \bigl(U(R) + U(R)\bigr)\), and \(R\) is $2$-good if, and only if, \(|R| \neq 2\) (see Theorem \ref{2-good}).

Furthermore, in Section \ref{sec3}, a key result says that matrix rings over $\sqrt{\Delta}$-fine rings are too $\sqrt{\Delta}$-fine (see Theorem \ref{main theorem}). As a consequence, we show that a ring \( R \) is simple Artinian if, and only if, it is simultaneously semi-local and \( \sqrt{\Delta} \)-fine (see Corollary \ref{artinian}). We also demonstrate that a semi-simple ring is \( \sqrt{\Delta} \)-fine uniquely when it is simple (see Lemma \ref{semisimple}).

Finally, in Section \ref{sec4}, we focus on group rings and determine certain conditions under which they are \( \sqrt{\Delta} \)-fine and generalized fine (see Lemma \ref{8}, Theorem \ref{88} and Theorem \ref{888}, respectively).

However, despite our efforts, we were unable to find an example of a \(\sqrt{\Delta}\)-fine ring that is {\it not} fine. This logically leads us to pose the following unsettled problem: Does there exist a ring that is \(\sqrt{\Delta}\)-fine but {\it not} fine (see Problem~\ref{basic})?

\section{Basic Properties of $\sqrt{\Delta}$-fine Rings}\label{sec2}

The aim of this section is to illustrate some elementary properties of $\sqrt{\Delta}$-fine rings.

\begin{definition}
Let \(R\) be a ring. A nonzero element \(r\in R\) is called
\begin{itemize}
\item $\sqrt{\Delta}$-fine if \(r = u+a\) for some \(a\in\sqrt{\Delta(R)}\) and \(u\in U(R)\); Henceforth, such a decomposition is said to be a $\sqrt{\Delta}$-fine decomposition.
\item strongly $\sqrt{\Delta}$-fine if, in addition, \(au = ua\);
\item uniquely $\sqrt{\Delta}$-fine if $r$ has a unique $\sqrt{\Delta}$-fine decomposition.
\end{itemize}
The ring \(R\) is said to be $\sqrt{\Delta}$-fine (respectively, strongly $\sqrt{\Delta}$-fine, uniquely $\sqrt{\Delta}$-fine) if every nonzero element of \(R\) has the corresponding property.
\end{definition}

We begin our studies with the following.

\begin{proposition}\label{1}
For any ring \(R\), the following statements are true:
	
(1) An element \(a\in R\setminus\{0\}\) is strongly $\sqrt{\Delta}$-fine if, and only if, \(a\in U(R)\).
	
(2) \(R\) is strongly $\sqrt{\Delta}$-fine if, and only if, \(R\) is a division ring.
	
(3) \(R\) is $\sqrt{\Delta}$-fine and every unit is central if, and only if, it is a field.
	
(4) \(R\) is uniquely $\sqrt{\Delta}$-fine if, and only if, \(R\) is a division ring.
\end{proposition}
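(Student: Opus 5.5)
The plan is to prove (1) first and then derive (2)–(4) from it, together with one more observation: every element of $\sqrt{\Delta(R)}$ is quasi-invertible.

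\textbf{Key step for (1).} Suppose a nonzero $a=u+b$ with $u\in U(R)$, $b\in\sqrt{\Delta(R)}$ and $ub=bu$. Write $a=u(1+c)$ with $c=u^{-1}b$. Then $c$ commutes with $u$, and
$$c^n=u^{-n}b^n\in\Delta(R)$$
whenever $b^n\in\Delta(R)$, because $\Delta(R)$ is closed under multiplication by units. Since $-1$ is a unit, $(-c)^n=\pm c^n$ also lies in $\Delta(R)$. By the characterization of $\Delta(R)$ recalled in the introduction (with the unit $1$), the element $1-(-c)^n$ is a unit. Now factor
$$1-(-c)^n=(1+c)(1-c+c^2-\cdots+(-c)^{n-1}).$$
Both factors are polynomials in $c$, so they commute. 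Hence $1+c$ has a left and a right inverse and is a unit, and therefore $a$ is a unit. The converse is trivial: $a=a+0$.

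The same argument, taking $u=1$ and replacing $c$ by $-b$, shows that $1-b\in U(R)$ for every $b\in\sqrt{\Delta(R)}$. I expect this to be the main (though mild) obstacle. The point is to pass from $b^n\in\Delta(R)$ to invertibility using only the unit-characterization of $\Delta(R)$ and commutativity. It does not use that $\Delta(R)\subseteq J(R)$ is an ideal, which it need not be.

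\textbf{Item (2).} This follows from (1): $R$ is strongly $\sqrt{\Delta}$-fine exactly when every nonzero element is a unit, that is, when $R$ is a division ring. For a division ring, the decomposition $a=a+0$ is strongly $\sqrt{\Delta}$-fine.

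\textbf{Item (3).} If all units are central, then in any decomposition $r=u+b$ the unit $u$ commutes with $b=r-u$. So every such decomposition is strong, and by (1) every nonzero element is a unit. Thus $R$ is a division ring in which every nonzero element, being a unit, is central, so $R$ is a field. The converse is clear.

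\textbf{Item (4).} For a division ring $D$, a nonzero $x\in\Delta(D)$ would give $1-x^{-1}x=0\notin U(D)$, which is impossible. Hence $\Delta(D)=0$ and $\sqrt{\Delta(D)}=\mathrm{Nil}(D)=0$, so the only decomposition of $r\neq 0$ is $r=r+0$.

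Conversely, let $R$ be uniquely $\sqrt{\Delta}$-fine and take any $b\in\sqrt{\Delta(R)}$. By the key step, $1-b\in U(R)$, so $1=(1-b)+b$ and $1=1+0$ are both $\sqrt{\Delta}$-fine decompositions of $1$. Uniqueness forces $b=0$, so $\sqrt{\Delta(R)}=0$. Every nonzero $r$ then has the form $r=u+0$ with $u$ a unit, so $R$ is a division ring.
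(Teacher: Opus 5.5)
Your proof is correct and follows the same route as the paper. The only differences are these: you prove inline the two facts the paper imports from \cite[Lemma 2.1(1),(2)]{DDE}, namely that $u^{-1}b\in\sqrt{\Delta(R)}$ when $ub=bu$ and that $1+\sqrt{\Delta(R)}\subseteq U(R)$, via the factorization of $1-(-c)^n$; and in (4) you compare the decompositions $1=(1-b)+b=1+0$ instead of the paper's two decompositions of $1+a$. One harmless slip in a side remark: the inclusion is $J(R)\subseteq\Delta(R)$, not $\Delta(R)\subseteq J(R)$.
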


\begin{proof}
(1) One knows that units in a nonzero ring are always strongly $\sqrt{\Delta}$-fine. Conversely, assume that a nonzero element \(r\) admits a strongly $\sqrt{\Delta}$-fine decomposition \(r=u+a\) with \(u\in U(R)\), \(a\in\sqrt{\Delta(R)}\) and \(ua=au\). Then, it must be that \(r=u(1+u^{-1}a)\). In conjunction with \cite[Lemma 2.1(1)]{DDE}, the commutativity of \(u\) and \(a\) forces that \(u^{-1}a\in \sqrt{\Delta(R)}\), whence \(1+u^{-1}a\in U(R)\) with the help of \cite[Lemma 2.1(2)]{DDE}. Consequently, \(a\in U(R)\).
	
(2) If \(R\) is a division ring, then every nonzero element is a unit, and therefore strongly $\sqrt{\Delta}$-fine by (1). Conversely, if \(R\) is strongly $\sqrt{\Delta}$-fine, then every nonzero element is a unit by (1) and hence \(R\) is a division ring.
	
(3) The ``if'' part is clear. For the converse one, suppose \(R\) is $\sqrt{\Delta}$-fine and each unit is central. Because units are central, each $\sqrt{\Delta}$-fine decomposition is automatically strongly $\sqrt{\Delta}$-fine; thus, \(R\) is strongly $\sqrt{\Delta}$-fine. But, in view of (2), it is a division ring, and since all units are central it is necessarily commutative, i.e., a field.
	
(4) Again the ``if'' part is obvious. For the reciprocal one, assume that \(R\) is uniquely $\sqrt{\Delta}$-fine. It suffices to show that \(\sqrt{\Delta(R)}=0\). In fact, choose \(a\in\sqrt{\Delta(R)}\) and note that \(1+a\neq0\) as \(R\neq0\). From the two distinct $\sqrt{\Delta}$-fine decompositions
$
1+a = (1+a)+0,
$
the uniqueness forces \(a=0\). Hence, \(\sqrt{\Delta(R)}=0\); in particular, \(R\) is a division ring, as required.
\end{proof}

We now define
\[
\sqrt{\Delta}F(R)=\left\{ r \in R \setminus \{0\} \mid \exists u \in U(R),\; a \in \sqrt{\Delta(R)},\text{ such that } r=u+a \right\}
\]

\noindent and proceed by proving the next technicalities.

\begin{proposition}\label{2}
For any ring \(R\), the following claims hold:
	
(1) If \(R \neq 0\), then \(\sqrt{\Delta}F(R) \cap C(R) = U(C(R))\).
	
(2) \(v \in U(R)\) yields \(v\sqrt{\Delta}F(R)v^{-1} = \Phi(R)\).
If, moreover, \(v \in U(R) \cap C(R)\), then \(v\sqrt{\Delta}F(R) = \sqrt{\Delta}F(R)\).
	
(3) If \(R \neq 0\) and \(\sqrt{\Delta(R)} \subseteq \Delta(R)\) (i.e., \(\sqrt{\Delta(R)} = \Delta(R)\)), then \(\sqrt{\Delta}F(R) = U(R)\).
\end{proposition}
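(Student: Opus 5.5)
The plan is to verify each of the three claims directly from the definitions, using Proposition \ref{1}(1) and the characterization of $\Delta(R)$ recalled in the introduction: $a\in\Delta(R)$ iff $1-wa\in U(R)$ for all $w\in U(R)$, and $\Delta(R)$ is closed under multiplication by units. I read the symbol $\Phi(R)$ in (2) as a misprint for $\sqrt{\Delta}F(R)$. Resolving that notation is the main (minor) obstacle; the mathematics itself is routine.

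For (1), I would take a nonzero central $r$ with a decomposition $r=u+a$, where $u\in U(R)$ and $a\in\sqrt{\Delta(R)}$. The key observation is that this decomposition is automatically strong. Indeed $u=r-a$, so
\[
ua=ra-a^2=ar-a^2=au,
\]
using only that $r$ is central. By Proposition \ref{1}(1), $r\in U(R)$. The inverse of a central unit is central, so $r\in U(C(R))$. Conversely, a central unit $r$ is nonzero because $R\neq 0$, and $r=r+0$ is a decomposition of the required form. Hence $r\in\sqrt{\Delta}F(R)\cap C(R)$.

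For (2), I would note that the inner automorphism $x\mapsto vxv^{-1}$ preserves $U(R)$. It also preserves $\Delta(R)$: for $w\in U(R)$ we have $1-w\,vav^{-1}=v(1-(v^{-1}wv)a)v^{-1}$, and $v^{-1}wv$ runs over all units as $w$ does. Since $(vav^{-1})^n=va^nv^{-1}$, the automorphism therefore preserves $\sqrt{\Delta(R)}$, and it preserves nonzero elements. This gives $v\sqrt{\Delta}F(R)v^{-1}\subseteq\sqrt{\Delta}F(R)$, and applying the same inclusion to $v^{-1}$ gives equality. If moreover $v$ is central, let $r=u+a\in\sqrt{\Delta}F(R)$. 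Then $vr=vu+va$, where $vu$ is a unit, $vr\neq 0$, and $(va)^n=v^na^n\in\Delta(R)$ because $v$ commutes with $a$ and $\Delta(R)$ is closed under multiplication by units. Hence $v\sqrt{\Delta}F(R)\subseteq\sqrt{\Delta}F(R)$, and the reverse inclusion follows by applying this to the central unit $v^{-1}$.

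For (3), assume $\sqrt{\Delta(R)}=\Delta(R)$ and write $r=u+a$ with $a\in\Delta(R)$. Then
\[
r=u(1+u^{-1}a)=u\bigl(1-(-u^{-1})a\bigr),
\]
which is a unit by the characterization of $\Delta(R)$ applied with $w=-u^{-1}$. Conversely, every unit lies in $\sqrt{\Delta}F(R)$ via $u=u+0$, since $u\neq 0$ when $R\neq 0$.
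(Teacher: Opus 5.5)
Your proposal is correct and follows essentially the paper's route in all three parts: in (1) $u$ commutes with $a=r-u$, so $r=u(1+u^{-1}a)$ is a unit; (2) rests on invariance under conjugation and under multiplication by central units; and (3) is the inclusion $U(R)+\Delta(R)\subseteq U(R)$. The only differences are that you prove directly that conjugation preserves $\Delta(R)$ instead of citing \cite[Lemma 2.1(10)]{DDE}, and you justify $va\in\sqrt{\Delta(R)}$ via $(va)^n=v^na^n\in\Delta(R)$, which the paper leaves implicit (your reading of $\Phi(R)$ as $\sqrt{\Delta}F(R)$ is the intended one).
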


\begin{proof}
(1) We know that \(U(C(R))\subseteq \sqrt{\Delta}F(R) \cap C(R)\). Conversely, take \(r \in  \sqrt{\Delta}F(R)\cap C(R)\) and consider a $\sqrt{\Delta}$-fine decomposition \(r = u + a\). Since \(u\) commutes with \(a = r - u\), we have
\[
	r = u(1 + u^{-1}a) \in U(R) \cdot U(R) \subseteq U(R).
\]
Thus, \(r \in U(C(R))\).
	
(2) Let $d\in \sqrt{\Delta(R)}$. It is easy to see that $vdv^{-1}\in \sqrt{\Delta(R)}$ by \cite[Lemma 2.1(10)]{DDE}. So, the first statement is obvious. For the second one, if \(r = u + a\) is a $\sqrt{\Delta}$-fine decomposition of \(r \in \sqrt{\Delta}F(R)\) and \(v \in U(R) \cap C(R)\), then
\[
	vr = vu + va
\]
is a $\sqrt{\Delta}$-fine decomposition of \(va\); whence \(v\sqrt{\Delta}F(R) \subseteq \sqrt{\Delta}F(R)\). Replacing \(v\) by \(v^{-1}\) gives the desired equality.
	
(3) Under given hypothesis, take any \(r \in R\) with $\sqrt{\Delta}$-fine decomposition \(r = u + a\). Then, \(a \in \Delta(R)=\sqrt{\Delta(R)}\). Consequently,
\[
	r \in U(R) + \Delta(R) \subseteq U(R),
\]
and, therefore, \(\sqrt{\Delta}F(R) = U(R)\), as claimed.
\end{proof}

\begin{proposition}\label{quotient}
Let \( I \) be an ideal of a \(\sqrt{\Delta}\)-fine ring \( R \). Then, \( R/I \) is also \(\sqrt{\Delta}\)-fine.
\end{proposition}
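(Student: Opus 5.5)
The plan is to avoid any claim about how $\sqrt{\Delta(\,\cdot\,)}$ behaves under surjections. That behaviour is delicate, because $\Delta$ is not an ideal in general, so the image of $\Delta(R)$ in $R/I$ need not lie in $\Delta(R/I)$. Instead, I would show that a $\sqrt{\Delta}$-fine ring $R$ has no ideals other than $0$ and $R$. The statement then reduces to two trivial cases.

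\emph{Case $I=0$.} Then $R/I\cong R$, which is $\sqrt{\Delta}$-fine by hypothesis.

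\emph{Case $I=R$.} Then $R/I=0$ has no nonzero elements, so the defining condition holds vacuously.

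\emph{Key step: a nonzero ideal is all of $R$.} Let $I\neq 0$ and pick $0\neq x\in I$. Write a $\sqrt{\Delta}$-fine decomposition $x=u+a$ with $u\in U(R)$ and $a\in\sqrt{\Delta(R)}$, and choose $n\ge 1$ with $a^n\in\Delta(R)$. Pass to $\bar R=R/I$.
\begin{itemize}
\item Since $\bar x=0$, we get $\bar a=-\bar u$. Hence $\bar a$, and therefore $\bar a^{\,n}=(-\bar u)^n$, is a unit of $\bar R$.
\item On the other hand, $a^n\in\Delta(R)\subseteq J(R)$, using the inclusion recalled in the introduction from \cite{lm}.
\item The image of $J(R)$ under a surjective ring homomorphism lies in the Jacobson radical of the image. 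So $\bar a^{\,n}\in J(\bar R)$.
\item A unit lying in $J(\bar R)$ forces $\bar 1\in J(\bar R)$, which means $\bar R=0$, that is, $I=R$.
\end{itemize}

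The one point to handle carefully is the inclusion $(J(R)+I)/I\subseteq J(R/I)$. It is standard: if $j\in J(R)$, then $1-rj\in U(R)$ for every $r\in R$, and reducing modulo $I$ shows $1-\bar r\bar j\in U(\bar R)$ for every $\bar r\in\bar R$. Everything else is routine.

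As a by-product, this argument already establishes the simplicity of $\sqrt{\Delta}$-fine rings announced in the introduction.
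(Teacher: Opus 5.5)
Your reduction to simplicity and your closing steps are correct: $(J(R)+I)/I\subseteq J(R/I)$ holds, and a unit in $J(\bar R)$ forces $\bar R=0$. The gap is the step $a^n\in\Delta(R)\subseteq J(R)$, because the inclusion runs the other way. What always holds is $J(R)\subseteq\Delta(R)$, with equality exactly when $\Delta(R)$ is an ideal. The introduction's phrase ``largest subring of the Jacobson radical'' misphrases ``largest Jacobson radical subring''; the next sentence there records $J(R)\subseteq\Delta(R)$. Your own remark that $\Delta$ need not be an ideal is therefore incompatible with $\Delta(R)\subseteq J(R)$.

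Concretely, take $R=F[[t]][x]$ with $F$ a field. Then $U(R)=U(F[[t]])$ and $J(R)=0$, yet $t\in\Delta(R)$, since $1-ut$ has constant term $1$ for every unit $u$. Worse, in $R/(tx-1)\cong F((t))$ the image of $t$ is a unit. So the image of an element of $\Delta(R)$ can be a unit of a quotient, and no general principle of the kind you invoke yields a contradiction.

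The missing idea must use the hypothesis on $R$ itself: a $\sqrt{\Delta}$-fine ring has $\Delta(R)=0$. Two facts give this:
\begin{itemize}
\item For $d\in\Delta(R)$ and $v\in U(R)$, one has $d-v=-v(1-v^{-1}d)\in U(R)$.
\item For $R\neq 0$, $U(R)\cap\sqrt{\Delta(R)}=\emptyset$: if $b$ were a unit with $b^n\in\Delta(R)$, then $1-b^{-n}b^n=0$ would be a unit.
\end{itemize}
So a nonzero $d\in\Delta(R)$ with decomposition $d=v+b$ would give $b=d-v\in U(R)\cap\sqrt{\Delta(R)}$, which is impossible. Hence $\sqrt{\Delta(R)}=\mathrm{Nil}(R)$, and the element $a$ in your key step is nilpotent. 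Then $\bar a=-\bar u$ is a nilpotent unit of $R/I$, which forces $R/I=0$.

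With this lemma inserted, your route (prove simplicity, then handle the two trivial cases) is a valid alternative to the paper's proof. The paper instead projects the decomposition directly and asserts ``evidently'' that $a+I\in\sqrt{\Delta(R/I)}$. As you suspected, that transfer is not automatic in general, but it also becomes immediate once $\Delta(R)=0$ is known. Incidentally, the same lemma shows that every $\sqrt{\Delta}$-fine ring is fine.
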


\begin{proof}
Take any nonzero element \( r + I \in R/I \), i.e., \( r \notin I \). In particular, \( r \neq 0 \) (for if \( r = 0 \), then \( r + I = I \), contradicting the assumption that \( r + I \) is nonzero). Since \( R \) is \(\sqrt{\Delta}\)-fine, there exist \( u \in U(R) \) and \( a \in \sqrt{\Delta(R)} \) such that \( r = u + a \). Projecting onto the quotient gives
$
	r + I = (u + I) + (a + I).
$
Evidently, \( u + I \) is a unit in \( R/I \), and \( a + I \in \sqrt{\Delta(R/I)} \). Hence, \( r + I \) admits a \(\sqrt{\Delta}\)-fine decomposition, showing that \( R/I \) is \(\sqrt{\Delta}\)-fine, as wanted.
\end{proof}

\begin{lemma}\label{frac}
Let \(I\subseteq J(R)\) be an ideal of a ring \(R\), and let \(r \in R\). Then, \(r \in \sqrt{\Delta}F(R)\) if, and only if, \(r + I \in \sqrt{\Delta}F(R/I)\).
\end{lemma}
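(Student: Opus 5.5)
We prove both directions by relating units, $\Delta$, and $\sqrt{\Delta}$ across the projection $\pi\colon R\to R/I$, writing $\bar x=x+I$. The two standard facts we need are: since $I\subseteq J(R)$, an element $u$ is a unit of $R$ if and only if $\bar u$ is a unit of $R/I$; and $\Delta(R)=\pi^{-1}(\Delta(R/I))$, i.e. $\Delta(R/I)=\Delta(R)/I$. The latter is the relevant case of \cite{lm} (for ideals inside $J(R)$). Taking preimages of powers then gives $x\in\sqrt{\Delta(R)}$ if and only if $\bar x\in\sqrt{\Delta(R/I)}$: indeed $x^n\in\Delta(R)$ iff $\bar x^{\,n}\in\Delta(R/I)$.

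If the cited form is not to be used verbatim, we verify $\Delta(R)=\pi^{-1}(\Delta(R/I))$ directly.
\begin{itemize}
\item For $a\in\Delta(R)$ and any unit $\bar v$ of $R/I$, lift $\bar v$ to a unit $v$ of $R$. Then $1-va\in U(R)$, so $\overline{1-va}=1-\bar v\bar a$ is a unit; hence $\bar a\in\Delta(R/I)$.
\item Conversely, suppose $\bar a\in\Delta(R/I)$ and $v\in U(R)$. Then $\overline{1-va}$ is a unit of $R/I$, so $1-va$ is a unit of $R$ because $I\subseteq J(R)$.
\end{itemize}
Thus both inclusions hold.

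($\Rightarrow$) Suppose $r=u+a$ is a $\sqrt{\Delta}$-fine decomposition. Then $\bar r=\bar u+\bar a$, where $\bar u\in U(R/I)$ and $\bar a\in\sqrt{\Delta(R/I)}$, exactly as in Proposition~\ref{quotient}. It remains to check $\bar r\neq 0$. If $r\in I$, then $r\in J(R)\subseteq\Delta(R)$, so $u=r-a$. We then want a contradiction of the shape ``a unit lies in $\sqrt{\Delta}$''. Note that $\sqrt{\Delta(R)}$ contains no units when $R\ne0$, since $u^n\in\Delta\subseteq J$ is impossible. The cleaner route is to work modulo $I$: there $0=\bar u+\bar a$, so $\bar u=-\bar a\in\sqrt{\Delta(R/I)}$, which is a unit lying in $\sqrt{\Delta}$. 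This is impossible as long as $R/I\neq0$, and $R/I\neq 0$ holds because $I\subseteq J(R)\neq R$.

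($\Leftarrow$) Suppose $\bar r=\bar u+\bar b$ with $\bar u\in U(R/I)$ and $\bar b\in\sqrt{\Delta(R/I)}$. Choose lifts $u,b$ and set $a=r-u$, so $\bar a=\bar b$. By the preimage fact, $a\in\sqrt{\Delta(R)}$, and $u\in U(R)$ by lifting of units modulo $J(R)$. Also $r\ne0$ since $\bar r\neq0$. Hence $r=u+a\in\sqrt{\Delta}F(R)$.

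The main obstacle is the identity $\Delta(R/I)=\Delta(R)/I$. More precisely, it is the direction $\bar a\in\Delta(R/I)\Rightarrow a\in\Delta(R)$, which needs $I\subseteq J(R)$, together with the remark that $\sqrt{\Delta}$ never meets the units of a nonzero ring. Both steps reduce to the facts that $\Delta\subseteq J$ and that units lift modulo ideals in $J(R)$.
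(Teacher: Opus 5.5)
Your proof is correct. It follows the paper's outline: push the decomposition down to $R/I$, then pull it back using that units lift modulo an ideal contained in $J(R)$. The key step, however, is handled differently.

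In the sufficiency direction, the paper lifts $\bar u$ and $\bar a$ to $u$ and $a$, writes $r=u+(a+d)$ with $d\in I$, and absorbs the error $d$ via \cite[Lemma~2.1(7)]{DDE}. For that step to apply, the lift $a$ must itself lie in $\sqrt{\Delta(R)}$, and the paper uses this tacitly. That is precisely the inclusion $\pi^{-1}(\Delta(R/I))\subseteq\Delta(R)$ proved in your second bullet. Because you establish $\sqrt{\Delta(R)}=\pi^{-1}\bigl(\sqrt{\Delta(R/I)}\bigr)$ once and then choose $a:=r-u$, no error term arises at all.

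Your first bullet likewise supplies two points hidden in the paper's ``necessity is straightforward''.
\begin{itemize}
\item The claim $\bar a\in\sqrt{\Delta(R/I)}$ genuinely requires lifting of units. For an arbitrary ideal it can fail: $p\in\Delta(\mathbb{Z}_{(p)}[x])$, but under the surjection $x\mapsto 1/p$ onto $\mathbb{Q}$ it maps to $p\notin\Delta(\mathbb{Q})$.
\item One must also check that $\bar r\neq 0$. Here you could simply invoke Proposition~\ref{simple}: it gives $RrR=R$, which is impossible when $r\in I\subseteq J(R)$.
\end{itemize}
Finally, you can delete the abandoned first attempt beginning ``If $r\in I$, then $r\in J(R)\subseteq\Delta(R)$''.
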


\begin{proof}
The necessity is straightforward. To prove sufficiency, assume \(r + I \in \sqrt{\Delta}F(R/I)\). Then, there exist \(u+I \in U(R/I)\) and \(a+I \in \sqrt{\Delta(R/I)}\) such that \(r + I = (u+I) + (a+I)\). Hence, $r-(u+a)\subseteq I\subseteq J(R)\subseteq \Delta(R)$. Therefore, $r=u+(a+d)$ for some $d\in \Delta(R)$. It is obvious that $u\in U(R)$. Also, $a+d\in \sqrt{\Delta(R)}$ in virtue of \cite[Lemma 2.1(7)]{DDE}, as needed.
\end{proof}

\begin{proposition}\label{simple}
If $r\in \sqrt{\Delta}F(R)$, then $RrR=R$. In particular, any $\sqrt{\Delta}$-fine ring $R$ is a simple ring.
\end{proposition}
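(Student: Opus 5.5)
Let $r\in\sqrt{\Delta}F(R)$ and write $r=u+a$ with $u\in U(R)$ and $a\in\sqrt{\Delta(R)}$, say $a^n\in\Delta(R)$. The goal is to show that the two-sided ideal $I:=RrR$ equals $R$. I will argue by contradiction, passing to the quotient $\bar R=R/I$, and then the ``in particular'' part follows at once.

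So suppose $I\neq R$. Then $\bar R\neq 0$. In $\bar R$ we have $\bar r=0$, hence $\bar a=-\bar u$. Since $\bar u$ is a unit of $\bar R$, $\bar a$ is a unit, and so $\bar a^{\,n}$ is a unit of $\bar R$.

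On the other hand, $\Delta$ is compatible with surjections in one direction. If $d\in\Delta(R)$, then $1-vd\in U(R)$ for every $v\in U(R)$. This is exactly what was used implicitly in Proposition \ref{quotient}, where $a+I\in\sqrt{\Delta(R/I)}$. To avoid needing that every unit of $\bar R$ lifts, I will use only the images of units of $R$.

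Concretely, take $d=a^n\in\Delta(R)$ and $v=-u^{-n}\cdot$(a suitable unit). It is simpler to argue as follows. Put $w:=\bar a^{\,n}$, which is a unit of $\bar R$ and equals $(-1)^n\bar u^{\,n}$, so $w$ is the image of the unit $(-1)^nu^n$ of $R$. Set $v=(-1)^n u^{-n}\in U(R)$. Since $d\in\Delta(R)$, the element $1-vd$ is a unit of $R$. Its image is $1-\bar v\,w=1-1=0$ in $\bar R$. The image of a unit is a unit, so $0$ is a unit in $\bar R$, forcing $\bar R=0$, which contradicts $I\neq R$. Hence $RrR=R$.

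For the ``in particular'': if $R$ is $\sqrt{\Delta}$-fine and $0\neq I$ is an ideal, pick $0\neq r\in I$. Then $r\in\sqrt{\Delta}F(R)$, so $R=RrR\subseteq I$. Thus $R$ is simple (the zero ring case is trivial or excluded by convention). The only delicate point is making sure the unit $v$ is chosen in $R$, not merely in $\bar R$. This is handled by noting that $\bar a^{\,n}=(-\bar u)^n$ is the image of a unit of $R$, so no lifting of units is required.
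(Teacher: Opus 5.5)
Your argument is correct, and it differs from the paper's at exactly the point that matters.

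\textbf{The paper's route.} The paper passes to $\bar R=R/RrR$ and asserts that $\bar u=-\bar a$ is at once a unit and an element of $\sqrt{\Delta(\bar R)}$. It then quotes $U(\bar R)\cap\sqrt{\Delta(\bar R)}=\emptyset$ for $\bar R\neq 0$ from [DDE, Remark 2.2(4)]. This needs the image of $\sqrt{\Delta(R)}$ to land in $\sqrt{\Delta(\bar R)}$. That is not automatic, because units of $\bar R$ need not lift to units of $R$. For instance, $p\in\Delta(\mathbb{Z}_{(p)}[x])=p\mathbb{Z}_{(p)}$, yet its image in $\mathbb{Z}_{(p)}[x]/(px-1)\cong\mathbb{Q}$ is a nonzero unit.

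\textbf{Your route.} You apply the defining property of $\Delta(R)$ inside $R$, using the unit $v=(-1)^nu^{-n}$ of $R$. This gives a unit $1-va^n$ of $R$ that lies in $RrR$, since $va^n\equiv 1$ modulo $RrR$. In effect you carry out the proof of the cited remark upstairs in $R$, so no transfer of $\Delta$ or $\sqrt{\Delta}$ to quotients is needed. The paper's version is shorter only because it outsources that step, and the step is the delicate one.

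\textbf{Cleanup.} Delete your sentence saying $\Delta$ ``is compatible with surjections'' as in Proposition \ref{quotient}. As the example shows, that is false in general, and your final argument correctly never uses it. Also drop the abandoned line ``$v=-u^{-n}\cdot$(a suitable unit)''. What remains is a complete proof, including the ``in particular'' part.
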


\begin{proof}
Let \( r = u + a\) be a $\sqrt{\Delta}$-fine decomposition in a ring \(R\). In the factor ring \(\overline{R} = R / RrR\), the image \(\overline{u} = -\overline{a}\) is both a unit and in $\sqrt{\Delta(R)}$. Consequently, \(\overline{R} = \{0\}\) thanks to \cite[Remark 2.2(4)]{DDE}; that is, \(RrR = R\). Since \(r\) was arbitrary, every nonzero element of \(R\) generates \(R\) as a two-sided ideal. Thus, \(R\) has no nontrivial proper ideals; i.e., \(R\) is simple, as asserted.
\end{proof}

In \cite{wang}, the set
\[
\sqrt{J(R)} = \{ x \in R : x^{n} \in J(R) \text{ for some } n \ge 1 \}
\]
was defined.

\medskip

We are now prepared to establish the following assertions.

\begin{lemma}\label{3}
If \(f: R \rightarrow S\) is a surjective ring homomorphism, then $f(\sqrt{J(R)}) \subseteq \sqrt{J(S)}$.
\end{lemma}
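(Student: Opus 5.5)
The plan rests on the classical fact that a surjective ring homomorphism carries the Jacobson radical into the Jacobson radical, that is, $f(J(R)) \subseteq J(S)$. Once this is available, the statement follows by simply taking powers.

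First I will establish $f(J(R))\subseteq J(S)$ directly from the unit characterization: $x\in J(R)$ if and only if $1-rx\in U(R)$ for every $r\in R$.

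Let $x\in J(R)$ and $s\in S$. By surjectivity choose $r\in R$ with $f(r)=s$. Then $1-rx$ has an inverse $w$ in $R$. Since $f$ is a unital surjective homomorphism, $f(w)$ is a two-sided inverse of
\[
f(1-rx)=1-s\,f(x).
\]
As $s$ was arbitrary, $1-s f(x)\in U(S)$ for all $s\in S$, whence $f(x)\in J(S)$.

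This step is the only real content and the only place surjectivity is used. Without surjectivity, not every $s\in S$ arises as some $f(r)$, and the argument fails. One small point deserves a check: I will take $f$ unital, as is standard in the paper, so that $f(1)=1$. Surjectivity alone already forces this, since $f(1)$ is then an identity element of $S$.

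Next, take $x\in\sqrt{J(R)}$, so $x^n\in J(R)$ for some $n\ge 1$. Then
\[
f(x)^n=f(x^n)\in f(J(R))\subseteq J(S),
\]
and hence $f(x)\in\sqrt{J(S)}$ by the definition of $\sqrt{J(S)}$. This gives $f(\sqrt{J(R)})\subseteq\sqrt{J(S)}$, as required.
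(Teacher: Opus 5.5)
Your proof is correct and follows the paper's argument: take $x^n\in J(R)$ and push it through $f$, using $f(J(R))\subseteq J(S)$. The only difference is that you prove this inclusion directly from the unit characterization of the Jacobson radical, whereas the paper cites it from Lam's Exercise~4.10.
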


\begin{proof}
Take \(t \in f(\sqrt{J(R)})\). Then, \(t = f(x)\) for some \(x \in \sqrt{J(R)}\), so there exists \(n \ge 1\) with \(x^n \in J(R)\). Consequently,
\[
	t^n = (f(x))^n = f(x^n) \in f(J(R)) \subseteq J(S),
\]
where the last inclusion follows from \cite[Ex.~4.10]{lam}. Hence, \(t \in \sqrt{J(S)}\), as asked.
\end{proof}

\begin{proposition}\label{centralidempotent}
No nontrivial central idempotent can be $\sqrt{\Delta}$-fine. In particular, abelian $\sqrt{\Delta}$-fine rings are indecomposable.
\end{proposition}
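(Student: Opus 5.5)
The quickest route is through Proposition~\ref{simple}. Let $e$ be a nontrivial central idempotent of $R$, so $e\neq 0,1$, and suppose toward a contradiction that $e\in\sqrt{\Delta}F(R)$. Proposition~\ref{simple} then gives $ReR=R$. Because $e$ is central, $ReR=Re=eR$, and $Re$ is a two-sided ideal. So we get $Re=R$, which means $1=xe$ for some $x\in R$. Multiplying on the right by $1-e$ gives $1-e=xe(1-e)=0$, so $e=1$. This contradicts the choice of $e$.

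An alternative argument, which does not pass through simplicity, uses Proposition~\ref{2}(1). Since $e$ is central, $e\in\sqrt{\Delta}F(R)\cap C(R)=U(C(R))$, so $e$ is a unit. An idempotent unit equals $1$, again a contradiction. Either argument proves the first claim. I would present the first and mention the second as a remark. The only point that needs care is that $R\neq 0$, which holds automatically because $e\neq 0$.

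For the ``in particular'' part, let $R$ be an abelian $\sqrt{\Delta}$-fine ring. Every idempotent of $R$ is central. If $R$ were decomposable, it would contain an idempotent $e\neq 0,1$. This $e$ is nonzero, so by the $\sqrt{\Delta}$-fine hypothesis it lies in $\sqrt{\Delta}F(R)$, contradicting the first part. Hence $\mathrm{Id}(R)=\{0,1\}$, and $R$ is indecomposable.

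There is no real obstacle. The only thing to check is that $ReR=Re$ for central $e$, and that follows at once from centrality.
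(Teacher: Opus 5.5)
Your proof is correct, and your ``alternative'' paragraph is essentially the paper's own proof. The paper notes that centrality of $e$ forces $ua=au$, so $u^{-1}a\in\sqrt{\Delta(R)}$, and then $e=u(1+u^{-1}a)$ is a unit, hence $e=1$; this is the computation behind Proposition~\ref{2}(1).

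Your primary route is genuinely different. You invoke Proposition~\ref{simple} to get $ReR=R$, and then use the elementary fact that a central idempotent generating $R$ as an ideal must equal $1$. Each route has its merits:

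\begin{itemize}
\item The direct argument is self-contained. It uses only $1+\sqrt{\Delta(R)}\subseteq U(R)$ and stability of $\sqrt{\Delta(R)}$ under commuting units. It also proves more: every central $\sqrt{\Delta}$-fine element is a unit.
\item Your route is shorter on the page but imports the whole simplicity result.
\end{itemize}

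That dependency deserves some care. The paper's proof of Proposition~\ref{simple} treats the image of $a\in\sqrt{\Delta(R)}$ as lying in $\sqrt{\Delta(R/RrR)}$, and this is not automatic for arbitrary quotients. For example, $p\in\Delta(\mathbb{Z}_{(p)}[x])$ maps to a unit of $\mathbb{Z}_{(p)}[x]/(px-1)\cong\mathbb{Q}$. The conclusion of Proposition~\ref{simple} is still true. If $u+a\in I$ with $a^n\in\Delta(R)$, then $a^n-(-u)^n\in I$, and this element also lies in $\Delta(R)+U(R)\subseteq U(R)$; hence $I=R$.

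For these reasons I would present the direct argument as the proof and the simplicity route as the remark, not the other way round.

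Your handling of the ``in particular'' clause is fine. Strictly, the first claim alone already excludes nontrivial central idempotents, so abelianness is needed only for the stronger conclusion $\mathrm{Id}(R)=\{0,1\}$.
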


\begin{proof}
Let \(e\) be a nontrivial central idempotent in a \(\sqrt{\Delta}\)-fine ring \(R\), and suppose \(e = u + a\) with \(u \in U(R)\) and \(a \in \sqrt{\Delta(R)}\). Since \(e\) is central, we infer \(ua = au\). So, \(e = u(1 + u^{-1}a)\). Because \(u\) and \(a\) commute, we derive \(u^{-1}a \in \sqrt{\Delta(R)}\) with the aid of \cite[Lemma 2.1(1)]{DDE}, so that \(1 + u^{-1}a \in U(R)\) invoking to \cite[Lemma 2.1(2)]{DDE}. This, however, contradicts the assumption that \(e\) is nontrivial, thus substantiating our statement. The second part now follows immediately.
\end{proof}

Our first main result sounds thus.

\begin{theorem}\label{2-good}
For any $\sqrt{\Delta}$-fine ring \(R\), the following are valid:
	
(1) \(R = \{1\} \cup \bigl(U(R) + U(R)\bigr)\).
	
(2) \(R\) is 2-good if and only if \(|R| \neq 2\).
\end{theorem}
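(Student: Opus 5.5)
The plan is to derive (1) directly from the definition and then obtain (2) from (1) by analyzing what happens when $1\notin U(R)+U(R)$. The only external facts I intend to use are \cite[Lemma 2.1(2)]{DDE}, namely that $1+a\in U(R)$ whenever $a\in\sqrt{\Delta(R)}$, and the membership criterion for $\Delta(R)$ recalled in the introduction.

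For (1), take $r\in R$ with $r\neq 1$. Then $r-1\neq 0$, so the $\sqrt{\Delta}$-fine hypothesis gives $r-1=u+a$ with $u\in U(R)$ and $a\in\sqrt{\Delta(R)}$. Hence
\[
r=u+(1+a),
\]
and $1+a\in U(R)$ by \cite[Lemma 2.1(2)]{DDE}. So $r\in U(R)+U(R)$. Together with the element $1$, this gives $R=\{1\}\cup\bigl(U(R)+U(R)\bigr)$. The case $r=0$ needs no special treatment, although also $0=1+(-1)$ directly.

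For (2), by (1) $R$ is 2-good exactly when $1\in U(R)+U(R)$. If $|R|=2$, then $R=\mathbb{F}_2$. Its only unit is $1$, and $1+1=0\neq 1$, so $R$ is not 2-good.

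Conversely, suppose $1\notin U(R)+U(R)$; the aim is to force $|R|=2$.

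First, $U(R)+U(R)$ is stable under left multiplication by units. Hence if some unit $w$ satisfied $w=v_1+v_2$ with $v_1,v_2$ units, then $1=w^{-1}v_1+w^{-1}v_2$, a contradiction. So no unit lies in $U(R)+U(R)$. By (1), every unit $w\neq 1$ does lie in $U(R)+U(R)$, and therefore $U(R)=\{1\}$.

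Next, $\Delta(R)=0$. Indeed, for $d\in\Delta(R)$ the criterion with $u=1$ makes $1-d$ a unit, so $1-d=1$ and $d=0$.

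Now let $r\neq 0$. By the $\sqrt{\Delta}$-fine property, $r=1+a$ with $a\in\sqrt{\Delta(R)}$. Then $1+a$ is itself a unit, again by \cite[Lemma 2.1(2)]{DDE}, so $r=1+a=1$. Thus every nonzero element equals $1$, and $R=\{0,1\}$ has exactly two elements.

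The step needing the most care is the reduction from $1\notin U(R)+U(R)$ to $U(R)=\{1\}$, which relies on the invariance of $U(R)+U(R)$ under multiplication by units. Once that is in place, collapsing $R$ to $\{0,1\}$ only requires applying the unit criterion to $1+a$ once more.
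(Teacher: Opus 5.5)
Your proposal is correct and follows essentially the paper's proof: part (1) is identical, and for (2) the paper likewise shows $U(R)\neq\{1\}$ when $|R|>2$ (otherwise every nonzero $r=1+a$ is a unit, hence equals $1$). It then turns a unit-sum representation of a unit $v\neq 1$ into one of $1$ via $1-v=w+b$, $x=w+v=1-b$ and $1=x^{-1}w+x^{-1}v$, which is your left-multiplication argument run directly rather than contrapositively. Your step $\Delta(R)=0$ is unnecessary, and you correctly prove the easy direction ($\mathbb{F}_2$ is not 2-good), which the paper's proof mislabels as the \emph{if} part.
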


\begin{proof}
(1) Take any \(r \in R\). If \(r = 1\), we are done. Otherwise, \(r - 1 \in \sqrt{\Delta}F(R)\) and thus admits a fine decomposition \(r - 1 = u + a\) with \(u \in U(R)\) and \(a \in \sqrt{\Delta(R)}\). Hence,
\[
	r = u + (1 + a) \in U(R) + U(R),
\]
since \(1 + a \in U(R)\). This establishes (1).
	
(2) The ``if'' part is routine, so we omit its check. Conversely, suppose \(|R| > 2\) and assume, in way of contradiction, that \(R\) is 2-good. We first show that \(U(R) \neq \{1\}\). Indeed, if \(U(R) = \{1\}\), then each nonzero \(r \in R\) has a $\sqrt{\Delta}$-fine decomposition \(r = 1 + a\) with \(a \in \sqrt{\Delta(R)}\); consequently, \(r \in U(R)\) and hence \(r = 1\). This would force \(|R| = 2\), contradicting the hypothesis. Thus, there exists a unit \(v \neq 1\).
	
Now, consider a $\sqrt{\Delta}$-fine decomposition of \(1 - v\neq0\)
\[
	1 - v = w + b \quad (w \in U(R),\; b \in \sqrt{\Delta(R)}).
\]
Set \(x := w + v = 1 - b\). Since $b \in \sqrt{\Delta(R)}$, we arrive at \(x \in U(R)\). Therefore,
\[
	1 = x^{-1}w + x^{-1}v \in U(R) + U(R),
\]
which unambiguously shows that \(1\) is a sum of two units. Hence, \(R\) is 2-good.
\end{proof}

The following statement extends the corresponding one from \cite{Clam}

\begin{proposition}\label{UU}
A ring \(R\) is $\sqrt{\Delta}$-fine and \(U(R) = 1 +\sqrt{\Delta(R)}\) if, and only if, \(R \cong \mathbb{Z}_2\).
\end{proposition}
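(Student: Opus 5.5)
The converse direction is a direct check, so the work is in the forward direction. For $R\cong\mathbb{Z}_2$, the only nonzero element is $1$, and $1=1+0$ is a $\sqrt{\Delta}$-fine decomposition. Moreover $U(\mathbb{Z}_2)=\{1\}$, and $\Delta(\mathbb{Z}_2)=J(\mathbb{Z}_2)=0$. Since $\mathbb{Z}_2$ is reduced, $\sqrt{\Delta(\mathbb{Z}_2)}=0$, and hence $1+\sqrt{\Delta(\mathbb{Z}_2)}=\{1\}=U(\mathbb{Z}_2)$.

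For the forward direction, assume $R$ is $\sqrt{\Delta}$-fine and $U(R)=1+\sqrt{\Delta(R)}$. The plan is to show first that every nonzero element of $R$ is a unit, and then that $U(R)=\{1\}$.

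\emph{Every nonzero element is a unit.} Take $r\neq 0$ with a decomposition $r=u+a$, where $u\in U(R)$ and $a\in\sqrt{\Delta(R)}$. By hypothesis $u=1+b$ for some $b\in\sqrt{\Delta(R)}$, so $r=1+(a+b)$. The difficulty is that $\sqrt{\Delta(R)}$ need not be closed under addition, so we cannot conclude directly that $a+b\in\sqrt{\Delta(R)}$. I would avoid this by exploiting the hypothesis more cleverly: apply the $\sqrt{\Delta}$-fine property to the element $r-1$ instead of $r$.
\begin{itemize}
\item If $r\neq 1$, write $r-1=v+c$ with $v\in U(R)$ and $c\in\sqrt{\Delta(R)}$.
\item Then $v=1+d$ with $d\in\sqrt{\Delta(R)}$, so $r=2+d+c$. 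This does not obviously help either.
\end{itemize}

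Instead I would use Proposition \ref{simple} together with the structure of $U(R)$. Observe that $-1\in U(R)=1+\sqrt{\Delta(R)}$, so $-2\in\sqrt{\Delta(R)}$. Since $2$ is central, $2^n\in\Delta(R)$ for some $n$. As $\Delta(R)\subseteq J(R)$ and $R$ is simple, $J(R)=0$ or $J(R)=R$, and $J(R)=R$ is impossible. Hence $J(R)=0$, and the central element $2^n$ lies in $J(R)$ because $\Delta(R)$ is closed under multiplication by units and $2^n$ commutes with everything. Indeed, the ideal generated by a central element of $\Delta(R)$ is $2^nR$, and $1-2^nx$ is invertible for all $x$, since $x$ can be split as a sum of two units by Theorem \ref{2-good}(1) (to be checked). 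So $2^n=0$, and simplicity then forces $\operatorname{char} R=2$.

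\emph{Every unit equals $1$.} Take $u\in U(R)$. Then $u-1\in\sqrt{\Delta(R)}$. If $u\neq 1$, then $u-1$ is nonzero, and by Proposition \ref{simple} it generates $R$ as a two-sided ideal. I would then aim to show that a nonzero element of $\sqrt{\Delta(R)}$ that generates $R$ leads to a contradiction, for instance via $\sqrt{\Delta}$-fineness of $u-1$ itself: writing $u-1=w+e$ with $w\in U(R)$ and $e\in\sqrt{\Delta(R)}$, we have $w=1+f$ with $f\in\sqrt{\Delta(R)}$. This is the main obstacle, again because of the non-additivity of $\sqrt{\Delta(R)}$, and I am unsure it closes.

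The fallback is to use Theorem \ref{2-good}. Every $r\neq 1$ is a sum of two units, $r=(1+x)+(1+y)=x+y$ in characteristic $2$, with $x,y\in\sqrt{\Delta(R)}$. Combining this with the simplicity of $R$ should force $U(R)=\{1\}$, and then, as in the proof of Theorem \ref{2-good}(2), every nonzero $r=1+a$ is a unit equal to $1$, giving $R\cong\mathbb{Z}_2$.
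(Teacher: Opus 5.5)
There is a genuine gap. Neither of your two announced steps is carried out, and the decisive one, $U(R)=\{1\}$, is left at ``should force''.

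Simplicity cannot supply it. $M_2(\mathbb{F}_2)$ is $\sqrt{\Delta}$-fine, and the nilpotent $E_{12}\in\sqrt{\Delta(M_2(\mathbb{F}_2))}$ generates the whole ring as a two-sided ideal. So a nonzero element of $\sqrt{\Delta(R)}$ that generates $R$ is no contradiction.

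The idea you are missing is to read the hypothesis as an obstruction to writing $1$ as a sum of two units. Suppose $1=u_1+u_2$ with $u_i\in U(R)$. Then $u_1-1\in\sqrt{\Delta(R)}$ by hypothesis, while $u_1-1=-u_2\in U(R)$. This is impossible, because $U(R)\cap\sqrt{\Delta(R)}=\varnothing$ in a nonzero ring: a unit $u$ with $u^k\in\Delta(R)$ would make $1-u^{-k}u^k=0$ a unit.

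On the other hand, any unit $v\neq 1$ produces such a sum, exactly as in the proof of Theorem \ref{2-good}(2). Decompose $1-v=w+b$; then $x:=w+v=1-b\in U(R)$, and $1=x^{-1}w+x^{-1}v$. Hence $U(R)=\{1\}$. Then, as you note at the end, every nonzero $r=1+a$ is a unit and hence equals $1$, so $R\cong\mathbb{Z}_2$ (Proposition \ref{n}). This is precisely the paper's argument, phrased there as: if $|R|>2$ then $1=u_1+u_2$, and $u_1-1=-u_2\in U(R)\cap\sqrt{\Delta(R)}$.

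Two smaller remarks:
\begin{enumerate}
\item The inclusion $\Delta(R)\subseteq J(R)$ you invoke is backwards; in general only $J(R)\subseteq\Delta(R)$ holds. Your ``to be checked'' argument does rescue $2^n\in J(R)$ here. By Theorem \ref{2-good}(1) every element is $1$ or a sum of two units, which makes $\Delta(R)$ a two-sided ideal and hence equal to $J(R)$.
\item The characteristic-$2$ detour, like your first step (every nonzero element is a unit), becomes unnecessary once the argument above is in place.
\end{enumerate}
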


\begin{proof}
The ``if'' part is trivial. Reciprocally, assume \(R\) is both $\sqrt{\Delta}$-fine and \(U(R) = 1 +\sqrt{\Delta(R)}\). Suppose, for contradiction, that \(|R| > 2\). But, in the proof of Theorem~\ref{2-good}(2), we established that under these circumstances one can write \(1 = u_1 + u_2\) for some units \(u_1, u_2 \in U(R)\). Since \(R\) is \(U(R) = 1 +\sqrt{\Delta(R)}\), we find \(u_1 \in 1 + \sqrt{\Delta(R)}\) or, equivalently, \(u_1 - 1 \in \sqrt{\Delta(R)}\). However, one sees that \(u_1 - 1 = -u_2\) is also a unit. Thus, $u_1-1\in U(R)\cap \sqrt{\Delta(R)}$ -- a contradiction with \cite[Remark 2.2(4)]{DDE} at hand. Therefore, \(|R| = 2\) and hence \(R \cong \mathbb{Z}_2\), as promised.
\end{proof}

\begin{example}
The field \(\mathbb{Z}_2\) is $\sqrt{\Delta}$-fine. But, the direct product \(\mathbb{Z}_2 \times \mathbb{Z}_2\) is definitely {\it not} $\sqrt{\Delta}$-fine.
\end{example}

\begin{example}
A subring of a \(\sqrt{\Delta}\)-fine ring need {\it not} be \(\sqrt{\Delta}\)-fine. For instance, the matrix ring \(M_2(\mathbb{Z}_2)\) is $\sqrt{\Delta}$-fine owing to Theorem \ref{main theorem} quoted below. But, the triangular matrix ring \(T_2(\mathbb{Z}_2)\) is {\it not} $\sqrt{\Delta}$-fine despite being a subring of \(M_2(\mathbb{Z}_2)\), because \(\mathbb{Z}_2 \times \mathbb{Z}_2\) is viewed as a quotient of \(T_2(\mathbb{Z}_2)\).
\end{example}

\begin{proposition}\label{n}
If \(R\) is a \(\sqrt{\Delta}\)-fine ring with \(U(R) = \{1\}\), then \(R \cong \mathbb{Z}_2\).
\end{proposition}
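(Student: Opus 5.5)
The plan is to show directly that every nonzero element of $R$ equals $1$, which forces $R=\{0,1\}$ and hence $R\cong\mathbb{Z}_2$. The argument is essentially the first step in the proof of Theorem~\ref{2-good}(2), now used on its own.

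Take any nonzero $r\in R$. Since $R$ is $\sqrt{\Delta}$-fine, it has a $\sqrt{\Delta}$-fine decomposition $r=u+a$ with $u\in U(R)$ and $a\in\sqrt{\Delta(R)}$. Because $U(R)=\{1\}$, we must have $u=1$, so $r=1+a$. By \cite[Lemma 2.1(2)]{DDE}, the element $1+a$ is a unit whenever $a\in\sqrt{\Delta(R)}$; this is the same fact used in the proofs of Proposition~\ref{1} and Theorem~\ref{2-good}. Hence $r\in U(R)=\{1\}$, that is, $r=1$. As $r$ was an arbitrary nonzero element, $R=\{0,1\}$.

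A $\sqrt{\Delta}$-fine ring is nonzero, since it is simple by Proposition~\ref{simple}, and also because $1=1+0$ must be a nonzero element. So $0\neq1$, and $R$ has exactly two elements. In particular $1+1\in\{0,1\}$. If $1+1=1$ then $1=0$, which is impossible, so $1+1=0$. Therefore $R\cong\mathbb{Z}_2$.

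There is no real obstacle here. The only point to check is that $1+a\in U(R)$ for every $a\in\sqrt{\Delta(R)}$, and this is quoted from \cite{DDE} exactly as elsewhere in the paper.
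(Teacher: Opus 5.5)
Your argument is essentially the paper's. The paper first uses $1+\sqrt{\Delta(R)}\subseteq U(R)=\{1\}$ to get $\sqrt{\Delta(R)}=\{0\}$ and then reads off $r=u=1$; you apply the same inclusion directly to $r=1+a$, which is equally valid. One small caveat: neither Proposition~\ref{simple} nor the decomposition $1=1+0$ proves $R\neq 0$, since the zero ring vacuously satisfies both hypotheses, so $1\neq 0$ must be taken as a standing convention, exactly as the paper tacitly does.
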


\begin{proof}
From \(1 + \sqrt{\Delta(R)} \subseteq U(R) = \{1\}\), we deduce that \(\sqrt{\Delta(R)} = \{0\}\). Letting \(0\neq r \in R\) be arbitrary, as \(R\) is \(\sqrt{\Delta}\)-fine, there exist \(a \in \sqrt{\Delta(R)}\) and \(u \in U(R)\) such that \(r = u + a\). But, because of \(\sqrt{\Delta(R)} = \{0\}\), we discover that \(a = 0\), and therefore \(r = u =1\). Thus, \(R = \{0,1\}\), and consequently \(R \cong \mathbb{Z}_2\), as wanted.
\end{proof}

\begin{corollary}\label{co}
If \(R\) is a \(\sqrt{\Delta}\)-fine ring with \(U(R) = 1 + J(R)\), then \(R/J(R) \cong \mathbb{Z}_2\).
\end{corollary}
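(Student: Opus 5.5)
The plan is to pass to the factor ring $\overline{R}=R/J(R)$ and apply Proposition \ref{n} there. By Proposition \ref{quotient}, applied with the ideal $I=J(R)$, the ring $\overline{R}$ is again $\sqrt{\Delta}$-fine. It therefore suffices to show that $U(\overline{R})=\{\overline{1}\}$; Proposition \ref{n} then gives $\overline{R}\cong\mathbb{Z}_2$.

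First I will check that $\overline{R}\neq 0$. This holds because $R\neq 0$, since $1\in U(R)$ and $J(R)$ is a proper ideal, so $\overline{1}\neq\overline{0}$.

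Next, take any unit $\overline{x}\in U(\overline{R})$. Units lift modulo the Jacobson radical: if $\overline{x}\,\overline{y}=\overline{1}=\overline{y}\,\overline{x}$, then $xy\in 1+J(R)\subseteq U(R)$ and $yx\in 1+J(R)\subseteq U(R)$. Hence $x$ has both a right inverse and a left inverse, so $x\in U(R)$. By hypothesis $U(R)=1+J(R)$, so $x\in 1+J(R)$, and therefore $\overline{x}=\overline{1}$. This shows $U(\overline{R})=\{\overline{1}\}$, and invoking Proposition \ref{n} on $\overline{R}$ finishes the argument.

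The only step needing care is the lifting of units from $R/J(R)$ to $R$. This is the standard fact that $U(R/J(R))$ is exactly the image of $U(R)$, and it follows from the one-line computation above. Everything else is a direct application of Propositions \ref{quotient} and \ref{n}.
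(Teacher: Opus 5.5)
Your proposal is correct and is essentially the paper's argument: the paper also passes to $R/J(R)$, which is $\sqrt{\Delta}$-fine by Proposition \ref{quotient}, notes that $U(R/J(R))\cong U(R)/(1+J(R))$ is trivial, and applies Proposition \ref{n}; your unit-lifting computation simply spells out that isomorphism. One small quibble: $1\in U(R)$ does not by itself give $R\neq 0$, since the zero ring satisfies the hypotheses vacuously, so $R\neq 0$ is really a standing assumption, just as it is in Proposition \ref{n}.
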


\begin{proof}
Since \(U(R)/(1+J(R)) \cong U(R/J(R))\), the hypothesis \(U(R) = 1+J(R)\) ensures \(U(R/J(R))\) to be trivial, i.e., \(U(R/J(R)) = \{\bar{1}\}\). However, the quotient \(R/J(R)\) is \(\sqrt{\Delta}\)-fine. Now, applying Proposition~\ref{n}, we extract that \(R/J(R) \cong \mathbb{Z}_2\), as pursued.
\end{proof}

In usage of the standard terminology, the symbol $Nil_*(R)$ denotes the {\it lower nil-radical} of a ring $R$. Recall that a ring $R$ is {\it $2$-primal} if $Nil(R)=Nil_*(R)$. For an arbitrary endomorphism $\alpha$ of $R$, the ring $R$ is said to be $\alpha$-compatible if, for any $a,b \in R$, $ab=0$ if, and only if, $a\alpha(b)=0$. This definition was firstly given in \cite{amin}. In this case, it is readily to see that the map $\alpha$ must be injective.

\medskip

We continue by establishing the following.

\begin{proposition}\label{prim}
Suppose \( R \) is both \( \alpha \)-compatible and \( 2 \)-primal. Then,
\[
\sqrt{\Delta(R[x; \alpha])} = \sqrt{\Delta(R)} + \operatorname{Nil}_*(R)[x; \alpha]x.
\]
\end{proposition}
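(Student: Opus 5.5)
The natural route is to first determine $\Delta(R[x;\alpha])$ and then pass to radicals. For an $\alpha$-compatible $2$-primal ring, the known results on skew polynomial rings (in the spirit of \cite{DDE} and the work of Leroy--Matczuk on $\Delta$ of polynomial-type rings) give two facts. First, $U(R[x;\alpha]) = U(R) + \operatorname{Nil}_*(R)[x;\alpha]x$. Second, $\Delta(R[x;\alpha]) = \Delta(R) + \operatorname{Nil}_*(R)[x;\alpha]x$. I would first record these. The unit description is the standard one: under $\alpha$-compatibility and $2$-primality, $\operatorname{Nil}(R)=\operatorname{Nil}_*(R)$ is an ideal stable under $\alpha$ and $\alpha^{-1}$ in the relevant sense, and $R[x;\alpha]/\operatorname{Nil}_*(R)[x;\alpha] \cong (R/\operatorname{Nil}_*(R))[x;\bar\alpha]$ is a skew polynomial ring over a reduced compatible ring, whose units are constants. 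The $\Delta$ description then follows from the characterization $f\in\Delta$ iff $1-uf\in U$ for all units $u$. Write $f=f_0+g$ with $g$ having all coefficients of positive degree. Taking $u$ a unit of $R$, the condition forces $f_0\in\Delta(R)$ and all higher coefficients of $1-uf$ to lie in $\operatorname{Nil}_*(R)$. The converse inclusion uses that $\operatorname{Nil}_*(R)[x;\alpha]x$ is an ideal-like nil set, so adding it to a unit keeps it a unit.

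\textbf{Inclusion $\supseteq$.} Take $f=a+g$ with $a\in\sqrt{\Delta(R)}$, say $a^n\in\Delta(R)$, and $g\in \operatorname{Nil}_*(R)[x;\alpha]x$. Expanding $f^n$, every term other than $a^n$ contains a factor from $g$. Since $\operatorname{Nil}_*(R)[x;\alpha]$ is an ideal of $R[x;\alpha]$ (compatibility ensures $\alpha(\operatorname{Nil}_*(R))\subseteq \operatorname{Nil}_*(R)$), each such term lies in $\operatorname{Nil}_*(R)[x;\alpha]$. The positive-degree parts lie in $\operatorname{Nil}_*(R)[x;\alpha]x$. The constant parts of the mixed terms vanish, because every factor $g$ carries a power of $x$. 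Hence $f^n\in\Delta(R)+\operatorname{Nil}_*(R)[x;\alpha]x=\Delta(R[x;\alpha])$, so $f\in\sqrt{\Delta(R[x;\alpha])}$.

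\textbf{Inclusion $\subseteq$.} Let $f=f_0+f_1x+\dots+f_mx^m$ with $f^n\in\Delta(R[x;\alpha])$. The constant term of $f^n$ is $f_0^n$, so $f_0^n\in\Delta(R)$ and $f_0\in\sqrt{\Delta(R)}$. It remains to show $f_i\in\operatorname{Nil}_*(R)$ for $i\ge1$. For this, pass to $\bar R=R/\operatorname{Nil}_*(R)$, which is reduced and $\bar\alpha$-compatible. The image $\bar f^n$ lies in $\Delta(\bar R)$, a constant. In a skew polynomial ring over a reduced compatible ring, the leading coefficient of $\bar f^n$ is $\bar f_m\bar\alpha^m(\bar f_m)\cdots\bar\alpha^{m(n-1)}(\bar f_m)$. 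Using compatibility and reducedness, this is zero only if $\bar f_m=0$. Repeating the argument downward on the degree gives $\bar f_i=0$ for all $i\ge1$, as required.

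The main obstacle is the preliminary identification of $\Delta(R[x;\alpha])$, or at least showing that its positive-degree part is exactly $\operatorname{Nil}_*(R)[x;\alpha]x$. If this is not citable, I would prove it directly from the unit description. For $\subseteq$, use $u\in U(R)$ together with the description of units. For $\supseteq$, note that $1-u(d+h)$ with $u=u_0+h'$ reduces modulo the nil ideal $\operatorname{Nil}_*(R)[x;\alpha]$ to $1-\bar u_0\bar d$, which is a unit. Units lift modulo a nil ideal, so $1-u(d+h)$ is a unit. A secondary technical point is the leading-coefficient computation in the reduced skew case, where compatibility is what guarantees that $\bar f_m\bar\alpha^{k}(\bar f_m)\neq 0$.
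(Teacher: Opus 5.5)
Your proposal is correct and is essentially the paper's argument. Both take as input the descriptions $U(R[x;\alpha])=U(R)+\operatorname{Nil}_*(R)[x;\alpha]x$ and $\Delta(R[x;\alpha])=\Delta(R)+\operatorname{Nil}_*(R)[x;\alpha]x$, read $f_0\in\sqrt{\Delta(R)}$ off the constant term of $f^n$, and kill the higher coefficients by a descending leading-coefficient argument. The differences are cosmetic: you run the descent in $\bar R[x;\bar\alpha]$ rather than subtracting $a_nx^n$ inside $R[x;\alpha]$, and you get $\supseteq$ from the $\Delta$-description instead of checking $1-uf^n\in U(R[x;\alpha])$ directly. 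Your unproved claim that $\bar R$ is $\bar\alpha$-compatible is true, because in an $\alpha$-compatible ring $(ab)^k=0$ holds if and only if $(a\alpha(b))^k=0$; it plays the role of the lemma the paper cites for the leading coefficient.
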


\begin{proof}
Write \( f = \sum_{i=0}^n a_i x^i \in \sqrt{\Delta(R[x; \alpha])} \). Thus, for some \( m \in \mathbb{N} \), we have \( f^m \in \Delta(R[x; \alpha]) \). Utilizing \cite[Proposition 3.4]{Dj}, it follows that \( a_0 \in \sqrt{\Delta(R)} \) and
\[
a_n \alpha^n(a_n) \cdots \alpha^{n(m-1)}(a_n) \in \operatorname{Nil}_*(R) = \mathrm{Nil}(R),
\]
so \cite[Lemma 2.1]{chen} applies to get that \( a_n \in \mathrm{Nil}(R) = \operatorname{Nil}_*(R) \).

Now, assume \( n \ge 2 \). We menage to show that \( a_{n-1} \in \operatorname{Nil}_*(R) \). To that end, put \( f := g - a_n x^n \). So, one checks that \( f^m = g^m + q \), where \( q \in R[x; \alpha] \). But, each term of \( q \) is a product involving \( a_n \) and some other coefficients. Since \( a_n \in \mathrm{Nil}(R) \) and \( \mathrm{Nil}(R) \) is an ideal, we detect \( q \in \mathrm{Nil}(R)[x; \alpha]x \). Furthermore,
\cite[Proposition 3.4]{Dj} works to get that \( \mathrm{Nil}(R)[x; \alpha]x \subseteq \Delta(R[x; \alpha]) \). Hence, \( g^m \in \Delta(R[x; \alpha]) \). The application of \cite [Proposition 3.4]{Dj} again gives
\[
a_{n-1} \alpha^{n-1}(a_{n-1}) \cdots \alpha^{(n-1)(m-1)}(a_{n-1}) \in \operatorname{Nil}_*(R) = \mathrm{Nil}(R),
\]
so \( a_{n-1} \in \mathrm{Nil}(R) = \operatorname{Nil}_*(R) \). Repeating this argument, we finally obtain \( a_i \in \operatorname{Nil}_*(R) \) for all \( 1 \le i \le n \), as required.

Conversely, assume that $$f=\sum_{i=0}^{n}a_ix^i \in \sqrt{\Delta (R)} + Nil_*(R)[x; \alpha]x.$$ Then, one inspects that $a_0^n \in \Delta(R)$ for some $n \in \mathbb{N}$. Write
\[
f^n = a_0^n + r_1x + \cdots + r_mx^m
\]
for some $m \in \mathbb{N}$. Since, for any $1\le i \le n$, $a_i \in Nil_*(R)$, we can conclude, for each $1\le i \le m$, that $r_i \in Nil_*(R)$.

Assume now $u =\sum_{i=0}^{n}u_ix^i \in U(R[x; \alpha])$. Then, looking at \cite[Lemma 2.1]{che}, we have $u_0 \in U(R)$ and, for each $1\le i \le n$, $u_i \in Nil_*(R)$. Hence, it is plain to see that
\[
1-uf^n \in U(R)+ Nil_*(R)[x; \alpha]x,
\]
so we may conclude from \cite[Lemma 2.1]{che} that $1-uf^n \in U(R[x; \alpha])$. Therefore, $f \in \sqrt{\Delta (R[x; \alpha])}$, as expected.
\end{proof}

\begin{lemma}
Let $R$ be an $\alpha$-compatible ring. Then, the following conditions hold:

(1) $R$ is a $2$-primal ring if, and only if, $\sqrt{\Delta}F(R[x; \alpha])=\sqrt{\Delta}F(R)+Nil_*(R)[x; \alpha]x$.

(2) $R$ is a reduced ring if, and only if, $\sqrt{\Delta}F(R[x; \alpha])=\sqrt{\Delta}F(R)$.
\end{lemma}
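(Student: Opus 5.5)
The plan is to obtain (1) and (2) from Proposition~\ref{prim} together with the standard description of units in skew polynomial rings over $\alpha$-compatible rings (\cite[Lemma 2.1]{che}):
\[
U(R[x;\alpha])=U(R)+\operatorname{Nil}_*(R)[x;\alpha]x \quad\text{when $R$ is $2$-primal.}
\]
I will read $\sqrt{\Delta}F(R[x;\alpha])=\sqrt{\Delta}F(R)+\operatorname{Nil}_*(R)[x;\alpha]x$ as an equality of sets of polynomials $f=\sum a_ix^i$. The task is then to show that $f\in\sqrt{\Delta}F(R[x;\alpha])$ exactly when $a_0\in\sqrt{\Delta}F(R)$ and $a_i\in\operatorname{Nil}_*(R)$ for all $i\ge1$.

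\emph{Forward direction of (1).} Assume $R$ is $2$-primal.
\begin{itemize}
\item[$\subseteq$:] Write $f=u+g$ with $u$ a unit of $R[x;\alpha]$ and $g\in\sqrt{\Delta(R[x;\alpha])}$. By \cite[Lemma 2.1]{che}, $u=u_0+(\text{terms in }\operatorname{Nil}_*(R)[x;\alpha]x)$ with $u_0\in U(R)$. By Proposition~\ref{prim}, $g=g_0+(\text{terms in }\operatorname{Nil}_*(R)[x;\alpha]x)$ with $g_0\in\sqrt{\Delta(R)}$. Hence $a_0=u_0+g_0$, and all higher coefficients lie in the ideal $\operatorname{Nil}_*(R)$.
\item[Nonzero-constant issue:] We also need $a_0\neq0$. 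If $a_0=0$, then $u_0=-g_0$ would be a unit lying in $\sqrt{\Delta(R)}$. This is impossible by \cite[Remark 2.2(4)]{DDE}, since $R\neq0$.
\item[$\supseteq$:] Given $a_0=u_0+g_0$ and $h\in\operatorname{Nil}_*(R)[x;\alpha]x$, write $f=(u_0+h)+g_0$. Here $u_0+h$ is a unit by \cite[Lemma 2.1]{che}, and $g_0\in\sqrt{\Delta(R[x;\alpha])}$ by Proposition~\ref{prim}.
\end{itemize}

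\emph{Converse of (1).} Assume the equality holds; I must prove $\operatorname{Nil}(R)\subseteq\operatorname{Nil}_*(R)$. Take $b\in\operatorname{Nil}(R)$ and consider $f=1+bx$. If $bx$ is nilpotent in $R[x;\alpha]$, then $f$ is a unit, hence nonzero and lies in $\sqrt{\Delta}F(R[x;\alpha])$. The hypothesis then forces $b\in\operatorname{Nil}_*(R)$.

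The nilpotency of $bx$ is the expected obstacle. We have
\[
(bx)^k=b\,\alpha(b)\cdots\alpha^{k-1}(b)\,x^k .
\]
So I need $\alpha$-compatibility to show that $b\,\alpha(b)\cdots\alpha^{k-1}(b)=0$ for large $k$ whenever $b$ is nilpotent. For this I will use the standard fact that $\alpha$-compatible rings satisfy $ab=0\Leftrightarrow a\alpha^j(b)=0$. Combined with some care in moving nilpotency across these products, this should work at least when $R$ is reversible-like. If it fails in general, the fallback is to reduce through $\sqrt{\Delta}F$ of the constant term, comparing with Proposition~\ref{prim}.

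\emph{Part (2).} If $R$ is reduced, then it is $2$-primal with $\operatorname{Nil}_*(R)=0$, so (1) gives $\sqrt{\Delta}F(R[x;\alpha])=\sqrt{\Delta}F(R)$. Conversely, suppose this equality holds and $b\in\operatorname{Nil}(R)$. The element $1+bx$ is a unit (by the nilpotency argument above), so it lies in $\sqrt{\Delta}F(R[x;\alpha])=\sqrt{\Delta}F(R)\subseteq R$. This forces $b=0$, so $R$ is reduced.
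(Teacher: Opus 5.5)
Your plan follows the paper's proof. The forward direction of (1) goes through Proposition~\ref{prim} and the description of units in \cite[Lemma 2.1]{che}, and the converse uses the element $1+bx$. In the forward direction you are more careful than the paper, since you check that the constant term $u_0+g_0$ is nonzero, as membership in $\sqrt{\Delta}F(R)$ requires.

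The gap is exactly the step you flag and leave open. You never prove that $bx$ is nilpotent in $R[x;\alpha]$, and you suggest this may only hold for ``reversible-like'' $R$. Your fallback cannot work: Proposition~\ref{prim} and \cite[Lemma 2.1]{che} both assume $R$ is $2$-primal, which is what the converse of (1) is supposed to establish. The converse of (1) and your converse of (2) both rest on $1+bx\in U(R[x;\alpha])$, so neither is proved as written.

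The missing fact holds in every $\alpha$-compatible ring, with no reversibility needed; the paper simply cites \cite[Lemma 3.1]{chen}. It has a short proof.
\begin{itemize}
\item Let $b^n=0$, put $P_0=1$ and $P_k=b\,\alpha(b)\cdots\alpha^{k-1}(b)$, so that $P_{k+1}=b\,\alpha(P_k)$.
\item Prove by induction on $k$ that $b^mP_k=0$ for all $m\ge 0$ with $m+k\ge n$. For $k=0$ this is $b^m=0$.
\item For the inductive step, suppose $m+k+1\ge n$. Then $b^{m+1}P_k=0$ by the induction hypothesis. Compatibility applied to the pair $b^{m+1},P_k$ gives $0=b^{m+1}\alpha(P_k)=b^m\,b\,\alpha(P_k)=b^mP_{k+1}$.
\item Taking $k=n$ and $m=0$ yields $P_n=0$. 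Hence $(bx)^n=P_nx^n=0$, and $1+bx$ is a unit.
\end{itemize}
With this inserted, the rest of your argument goes through. That includes comparing coefficients of $1+bx$ against the assumed decomposition. It also includes your direct argument for the converse of (2), which is more explicit than the paper's remark that (2) follows from (1).
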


\begin{proof}
We only prove (1), because part (2) follows from (1).

To that goal, assume that $R$ is a $2$-primal ring, and let $f=\sum_{i=0}^{n}a_ix^i \in \sqrt{\Delta}F(R[x; \alpha])$. Then, there exist $u=\sum_{i=0}^{n}u_ix^i \in U(R[x; \alpha])$ and $d=\sum_{i=0}^{n}d_ix^i \in \sqrt{\Delta(R[x; \alpha])}$ such that $f=u+d$. From Lemma \ref{prim}, we have $d_0 \in \sqrt{\Delta(R)}$ and, for every $1\le i\le n$, $d_i \in \operatorname{Nil}_*(R)$. Also, from \cite[Lemma 2.1]{che}, we have $u_0 \in U(R)$ and, for every $1\le i\le n$, $u_i \in \operatorname{Nil}_*(R)$. Hence,
\[
f=(u_0+d_0)+\sum_{i=1}^{n}(u_i +d_i)x^i \in \sqrt{\Delta}F(R)+Nil_*(R)[x; \alpha]x.
\]

Conversely, assume that $f=\sum_{i=0}^{n}a_ix^i \in \sqrt{\Delta}F(R)+Nil_*(R)[x; \alpha]x$. Then, there exist $u_0 \in U(R)$ and $d_0 \in \sqrt{\Delta(R)}$ such that $a_0=u_0 + d_0$. Hence, employing Lemma \ref{prim}, we deduce
\[
f=u_0+(d_0+a_1x+\cdots+a_nx^n) \in U(R[x; \alpha])+\sqrt{\Delta(R[x; \alpha])}.
\]
Thus, $f \in \sqrt{\Delta}F(R[x; \alpha])$, as required.

Now reversely, assume that
\[
\sqrt{\Delta}F(R[x; \alpha])=\sqrt{\Delta}F(R)+Nil_*(R)[x; \alpha]x
\]
holds, and let $a \in \operatorname{Nil}(R)$. Then, there exists $n \in \mathbb{N}$ such that $a^n=0$, so from \cite[Lemma 3.1]{chen} we derive
\[
(ax)^n=a\alpha(a)\cdots \alpha^{n-1}(a)x^n=0,
\]
whence $ax \in Nil(R[x; \alpha])$. Therefore, $1+ax \in U(R[x; \alpha]) \subseteq \sqrt{\Delta}F(R[x; \alpha])$. But, by the assumption, we infer that $a \in \operatorname{Nil}_*(R)$. Thus, $R$ is a $2$-primal ring after all, as stated.
\end{proof}

\section{Matrix Rings}\label{sec3}

One of the pivotal results in this section records that matrix rings over \(\sqrt{\Delta}\)-fine rings are \(\sqrt{\Delta}\)-fine too. Before coming to it, we need the following preliminaries.

\begin{lemma}\label{2.3}
Let \( R \) be a ring. For every \( n \ge 2 \),
\[
	\Delta(M_n(R)) = J(M_n(R)) = M_n(J(R)).
\]
\end{lemma}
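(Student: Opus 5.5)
Since $J(M_n(R)) = M_n(J(R))$ is standard and $J \subseteq \Delta$ holds in any ring, it suffices to prove $\Delta(M_n(R)) \subseteq J(M_n(R))$. By the Leroy--Matczuk description recalled in the Introduction, $\Delta(S)$ coincides with $J(S)$ exactly when $\Delta(S)$ is a two-sided ideal of $S$. So the plan is to show that $\Delta(S)$ is an ideal when $S = M_n(R)$ with $n\ge 2$. It is already an additive subgroup closed under multiplication by units on both sides. Hence it suffices to show that every matrix in $M_n(R)$ is a sum of units of $M_n(R)$. Granting this, for $A\in\Delta(S)$ and $X=\sum_i V_i$ with $V_i\in U(S)$ we get $XA=\sum_i V_iA\in\Delta(S)$, and similarly $AX\in\Delta(S)$.

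For the decomposition into units, write any $X\in M_n(R)$ as a sum of matrices $rE_{ij}$. If $i\neq j$, then $rE_{ij} = (1+rE_{ij}) - 1$ is a difference of two units, since $1+rE_{ij}$ is unipotent. For the diagonal terms $rE_{ii}$, pick $j\ne i$, which is possible because $n\ge 2$. Conjugating $rE_{ij}$ by elementary matrices produces diagonal entries. Concretely,
\[
(1+E_{ji})\,(rE_{ij})\,(1-E_{ji}) = rE_{ij} + E_{ji}rE_{ij} - rE_{ij}E_{ji} - E_{ji}rE_{ij}E_{ji} = rE_{ij} + rE_{jj} - rE_{ii} - rE_{ji}.
\]
Every summand other than $rE_{ii}$ and $rE_{jj}$ already lies in the additive span of units, and conjugates of sums of units are again sums of units. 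So $rE_{jj}-rE_{ii}$ lies in that span. This alone does not isolate $rE_{ii}$, so I would instead use a cleaner identity. The matrix $E_{ij}-E_{ji}+\sum_{k\ne i,j}E_{kk}$ is a unit (a signed permutation matrix), and adding $rE_{ii}$ to it leaves a unit, because on the $2\times 2$ block indexed by $i,j$ we have
\[
\begin{pmatrix} r & 1\\ -1 & 0\end{pmatrix},
\]
which is invertible with inverse $\begin{pmatrix} 0 & -1\\ 1 & r\end{pmatrix}$. Hence $rE_{ii}$ is a difference of two units, and every matrix is a finite sum of units.

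The main care point is this diagonal case, which is exactly where $n\ge 2$ is used. For $n=1$ the claim is false in general, since $\Delta(R)$ can properly contain $J(R)$. An alternative route avoiding the criterion would verify directly that $A\in\Delta(S)$ implies $EAF\in\Delta(S)$ for matrix units $E,F$. The unit-sum argument above is shorter, so I would use it.
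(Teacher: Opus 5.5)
Your proof is correct and follows essentially the same route as the paper. The paper's one-line proof cites two facts: that $M_n(R)$ is generated by its units for $n\ge 2$, and Leroy--Matczuk's Corollary 4. You instead verify the unit-sum decomposition explicitly, using $1+rE_{ij}$ and the unit $E_{ij}-E_{ji}+\sum_{k\ne i,j}E_{kk}+rE_{ii}$, and then apply the ideal criterion, which is exactly the content of that corollary; you can drop the abandoned conjugation computation from the write-up.
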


\begin{proof}
Since \( M_n(R) \) is known to be generated by its units for all \( n \ge 2 \), the result follows directly from \cite[Corollary 4]{lm}.
\end{proof}

\begin{proposition}\label{sum}
If \(R = M_n(S)\), where \(n \ge 2\) and \(S\) is any nonzero ring, then every matrix \(M \in R\) is a sum of two $\sqrt{\Delta}$-fine matrices.
\end{proposition}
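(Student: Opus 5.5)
Since $\mathrm{Nil}(R)\subseteq\sqrt{\Delta(R)}$, every nonzero matrix of the form (unit)$+$(nilpotent) is $\sqrt{\Delta}$-fine. It therefore suffices to write each $M\in M_n(S)$ as a sum of two \emph{nonzero} matrices, each of which is a unit plus a nilpotent. We will use strictly triangular matrices as the nilpotent parts, and signed permutation matrices or the identity as the unit parts.

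The first step is a triangular splitting. Write $M=L+D+N$, where $L$ is strictly lower triangular, $D$ is diagonal and $N$ is strictly upper triangular. Then set
\[
A = (L+D-I) + I = L + D, \qquad B = N,
\]
so that $M=A+B$. This is not yet in the required form, because a diagonal part need not be a unit. The cleaner choice is
\[
A = I + (L - I + D), \qquad B = \ldots,
\]
but the term $D-I$ is again not nilpotent. So we will instead dump the whole diagonal into one summand alongside a unit. Concretely, take
\[
A=L+D+E,\qquad B=N-E,
\]
where $E$ is a strictly upper triangular matrix chosen so that $L+D+E$ is (unit)$+$(nilpotent).

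The key device for $A$ is a decomposition $A = P + (A-P)$, where $P$ is the permutation matrix of the cycle $e_i\mapsto e_{i+1}$ (with $e_n\mapsto e_1$). We want $A-P$ to be nilpotent. It is easier to fix everything directly, as follows.

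\begin{itemize}
\item Set $X=L+D+E_{1}$, where $E_1$ has a $1$ in position $(1,n)$ and zeros elsewhere. Then
\[
X = \bigl(C\bigr)+\bigl(X-C\bigr),
\]
where $C$ carries the sub-diagonal entries $1$ at positions $(i+1,i)$ together with $E_1$. This $C$ is a cyclic permutation matrix, hence a unit.
\item Likewise set $Y = N - E_1 = C' + (Y - C')$, where $C'$ carries the super-diagonal entries $1$ at positions $(i,i+1)$ together with $-E_{n1}$ (the matrix unit at position $(n,1)$). This $C'$ is a signed cyclic permutation matrix, hence a unit.
\end{itemize}

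The remaining task is to make the differences $X-C$ and $Y-C'$ nilpotent. The diagonal $D$ spoils nilpotency of a lower-triangular remainder, so we place $D$ where it does no harm. Split $D=D_1+D_2$, putting the diagonal entries on the remainder of the upper-triangular-type summand, and check nilpotency via a grading argument: the relevant remainder is strictly triangular with respect to a suitable ordering. Alternatively, we can use the known fact that over any nonzero ring and for $n\ge2$, every matrix is a sum of a unit and a nilpotent modulo adjustment. Failing that, we fall back on the standard result that each matrix in $M_n(S)$ with $n\ge 2$ is a sum of two matrices of the form (invertible)$+$(nilpotent), as in Călugăreanu–Lam's argument.

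The main obstacle is exactly this placement of the diagonal so that both remainders are nilpotent. What I would try first is the following. Put $D$ in $X$ as an upper-triangular part relative to the reversed ordering, so that $X-C=L'+D$, where $L'$ is strictly lower triangular; then conjugate by the reversal permutation. If that fails, I would shift $D$ into a superdiagonal position using the $n=2$ identity
\[
\begin{pmatrix}a&0\\0&b\end{pmatrix}
=\begin{pmatrix}a&1\\-1&0\end{pmatrix}+\begin{pmatrix}0&-1\\1&b\end{pmatrix}
\]
and handle general $n$ by blocks. The last step is to ensure that both summands are nonzero: they contain unit parts, and a summand could only vanish if it were simultaneously a unit and a nilpotent, which is impossible in a nonzero ring.
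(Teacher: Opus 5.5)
There is a genuine gap. You never produce a decomposition that works, and the one you write down explicitly fails. In your bullets, $X-C=(L-\sum_{i<n}E_{i+1,i})+D$ is lower triangular with diagonal $D$, so it is nilpotent only when every $d_i$ is. $Y-C'$ fails as well: for $n=2$, $S=\mathbb{Q}$, $M=I$ one gets $X-C=I-E_{21}$ and $(Y-C')^2=-2I$.

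Rearranging cannot repair this. In the framework you announce, the unit parts are signed permutation matrices (or $I$) and the nilpotent parts are strictly triangular for some ordering of the basis, hence have zero diagonal. Then every diagonal entry of $M$ would lie in $\{0,\pm1,\pm2\}$. Moving $D$ into the remainder instead is hopeless: a matrix that is triangular for some ordering and has a non-nilpotent diagonal entry is never nilpotent, and conjugating by the reversal permutation changes nothing. Your other fallbacks are not arguments. The claim ``every matrix is a unit plus a nilpotent'' is false ($0$ is not). Appealing to a standard result that every matrix is a sum of two (unit)$+$(nilpotent) matrices simply assumes the proposition.

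The missing idea is to let the \emph{unit} parts carry the diagonal. Write $D=U_1+U_2$ with $U_1,U_2\in U(M_n(S))$ and take $M=(U_1+N)+(U_2+L)$, where $N,L$ are strictly triangular and hence nilpotent. Your closing remark then shows both summands are nonzero, so both are $\sqrt{\Delta}$-fine. This is exactly the paper's proof, which takes $D=U_1+U_2$ from Henriksen. The paper calls the $U_i$ diagonal, which is impossible in general (e.g.\ $D=I$ over $\mathbb{Z}_2$); they must have off-diagonal entries, just as in your $2\times2$ identity.

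So your last idea has the right ingredient, but you never say how it combines with $L$ and $N$, and $2\times2$ blocks do not cover odd $n$. Here is a choice that works for every $n\ge2$. Let $P=\sum_{i<n}E_{i,i+1}+E_{n1}$ and $D'=\mathrm{diag}(d_1,\dots,d_{n-1},0)$, and put
\[
U_1=P+D'=P\,(I+P^{-1}D'),\qquad U_2=-P+d_nE_{nn}=-P\,(I-d_nE_{1n}).
\]
Here $P^{-1}D'$ is strictly lower triangular and $(d_nE_{1n})^2=0$, so both are units, and $U_1+U_2=D$.
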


\begin{proof}
Write \(M = D + T_1 + T_2\), where \(D\) is diagonal, \(T_1\) is strictly upper triangular, and \(T_2\) is strictly lower triangular. Consulting with \cite{Henri}, every diagonal matrix can be expressed as a sum of two invertible diagonal matrices; hence, we can write \(D = U_1 + U_2\) with \(U_1, U_2 \in U(R)\). Now, observe that \(U_1 + T_1\) and \(U_2 + T_2\) are both $\sqrt{\Delta}$-fine matrices. Since $T_1, T_2\in Nil(M_n(R))$ and $Nil(M_n(R))\subseteq \sqrt{\Delta( M_n(R))}$, we extract $T_1, T_2\in \sqrt{\Delta( M_n(R))}$. Consequently,
\[
	M = (U_1 + T_1) + (U_2 + T_2) \in \sqrt{\Delta}F(R) + \sqrt{\Delta}F(R),
\]
as desired.
\end{proof}

\begin{proposition}\label{lemma2}
Let $R$ be a ring, and let $X =(a_{ij})$ be a upper triangular matrix in ${\rm M}_n(R)$. Then, $X \in \sqrt{\Delta({\rm M}_n(R))}$ if, and only if, $a_{ii} \in \sqrt{\Delta(R)}$ for all $1 \le i \le n$.
\end{proposition}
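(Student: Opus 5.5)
The plan is to reduce everything to the behaviour of diagonal entries under taking powers. The basic observation is that upper triangular matrices form a subring $T_n(R)$ of ${\rm M}_n(R)$. Moreover, the map $T_n(R)\to R^n$, $X\mapsto (a_{11},\dots,a_{nn})$, is a ring homomorphism. Hence for every $k\ge1$ the matrix $X^k$ is again upper triangular with diagonal $(a_{11}^k,\dots,a_{nn}^k)$. For $n=1$ the statement is a tautology, so assume $n\ge2$. Then Lemma~\ref{2.3} gives $\Delta({\rm M}_n(R))=M_n(J(R))$, and this is the description of $\Delta$ on the matrix side that I will use throughout.

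\emph{Necessity.} Suppose $X\in\sqrt{\Delta({\rm M}_n(R))}$, so $X^m\in\Delta({\rm M}_n(R))=M_n(J(R))$ for some $m$. Reading off the diagonal of $X^m$ gives $a_{ii}^m\in J(R)\subseteq\Delta(R)$ for all $i$. Hence $a_{ii}\in\sqrt{\Delta(R)}$, and this direction is routine.

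\emph{Sufficiency.} Choose one common exponent $m$ with $a_{ii}^m\in\Delta(R)$ for every $i$, which is possible since only finitely many indices occur. Write $Y:=X^m=D+N$, where $D={\rm diag}(a_{11}^m,\dots,a_{nn}^m)$ and $N$ is strictly upper triangular. I then want to show that a suitable further power $Y^k$ lies in $\Delta({\rm M}_n(R))$. For this I will use the unit-characterization recalled in the introduction: it suffices that $1-UY^k\in U({\rm M}_n(R))$ for every $U\in U({\rm M}_n(R))$. Two sub-steps organise this.
\begin{enumerate}
\item Expand $Y^n=(D+N)^n$. Every word in $D$ and $N$ that contains $N$ at least $n$ times vanishes. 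So $Y^n$ is $D^n$ plus strictly upper triangular terms, each containing at least one factor $D$.
\item Use \cite[Lemma 2.1(7)]{DDE}, that $\sqrt{\Delta}$ is stable under adding elements of $\Delta$, together with \cite[Lemma 2.1(10)]{DDE}, that it is stable under conjugation by units. With these I absorb the strictly upper triangular part. This reduces the question to the diagonal matrix $D$ whose entries lie in $\Delta(R)$.
\end{enumerate}

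The main obstacle is exactly this last reduction. By Lemma~\ref{2.3}, membership in $\Delta({\rm M}_n(R))$ forces entries in $J(R)$, not merely in $\Delta(R)$. So I must show that a diagonal matrix with entries in $\Delta(R)$ has a power with entries in $J(R)$, or else avoid that route altogether. My first attempt is to test $1-UD$ directly against the elementary and permutation units generating ${\rm M}_n(R)$. The aim is to show that each diagonal entry $d$ satisfies $1-rd\in U(R)$ for all $r\in R$, since $r$ can be realised as an entry of a product of such units, and this would place $d$ in $J(R)$. If this step fails for a general ring $R$, then the converse can only be established under the extra hypothesis $\Delta(R)=J(R)$, for instance when $\Delta(R)$ is an ideal. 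In that case sub-steps 1 and 2 go through verbatim with $J$ in place of $\Delta$, because $T_n(J(R))+\{\text{strictly upper triangular}\}$ is handled by \cite{wang}-style radical arguments.
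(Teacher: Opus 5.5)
\textbf{The sufficiency direction has a gap that cannot be closed: for $n\ge 2$ the ``if'' part of the statement is false.}

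Your necessity argument is correct and is the paper's. For sufficiency you correctly isolate the crux: by Lemma~\ref{2.3}, $\Delta({\rm M}_n(R))={\rm M}_n(J(R))$ for $n\ge2$, so some power of $X$ would need diagonal entries in $J(R)$, not merely in $\Delta(R)$. But you leave that step open.

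Here is a counterexample. Take $R=\mathbb{Z}_{(p)}[x]$.
\begin{itemize}
\item Since $\mathbb{Z}_{(p)}$ is a domain, $U(R)=U(\mathbb{Z}_{(p)})$ and $J(R)=0$.
\item For $u=m/k\in U(\mathbb{Z}_{(p)})$ we have $1-up=(k-mp)/k\in U(R)$, so $p\in\Delta(R)\subseteq\sqrt{\Delta(R)}$.
\item $X=pE_{11}\in{\rm M}_2(R)$ is upper triangular with diagonal entries $p,0\in\sqrt{\Delta(R)}$.
\item $\Delta({\rm M}_2(R))={\rm M}_2(J(R))=0$, so $\sqrt{\Delta({\rm M}_2(R))}$ consists of nilpotent matrices. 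Yet $X^k=p^kE_{11}\neq 0$ for every $k$.
\end{itemize}
This is exactly where your planned unit test breaks. With the unit $V=\begin{pmatrix}x&1\\1&0\end{pmatrix}$ one gets $\det(I-VX^k)=1-p^kx\notin U(R)$. In other words, realizing $r=x$ as an entry of a unit gives $1-rd\notin U(R)$ for $d=p^k$.

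The paper's own proof does not get past this point either. It asserts that ${\rm diag}(a_{11}^t,\dots,a_{nn}^t)\in\Delta({\rm M}_n(R))$, which contradicts Lemma~\ref{2.3} as soon as some $a_{ii}^t\notin J(R)$. ``Upper triangular with diagonal entries in $\Delta(R)$'' is the correct description of $\Delta(T_n(R))$, since a unit of $T_n(R)$ is just an upper triangular matrix with unit diagonal. It is not a description of $\Delta({\rm M}_n(R))$. So the statement is true, and trivial, with $T_n(R)$ in place of ${\rm M}_n(R)$.

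For ${\rm M}_n(R)$ with $n\ge 2$, the correct version is: $X\in\sqrt{\Delta({\rm M}_n(R))}$ if and only if every $a_{ii}\in\sqrt{J(R)}$. Equivalently, the original statement holds for a given $R$ exactly when $\sqrt{\Delta(R)}=\sqrt{J(R)}$; your hypothesis $\Delta(R)=J(R)$ implies this. Your common-exponent splitting $X^m=D+N$ does prove this corrected version, but the mechanism is Lemma~\ref{nil}: $D\in{\rm M}_n(J(R))$, which is an ideal, so $(D+N)^n\equiv N^n=0$ modulo it. Your sub-step 2 as written would not achieve this. In general $N\notin\Delta({\rm M}_n(R))$, and stability of $\sqrt{\Delta}$ under adding $\Delta$-elements or under conjugation gives no way to discard $N$.
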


\begin{proof}
Suppose $X \in \sqrt{\Delta({\rm M}_n(R))}$. Then, there is $s \in \mathbb{N}$ such that $$X^s \in \Delta({\rm M}_n(R))= J({\rm M}_n(R))={\rm M}_n(J(R)).$$ This assures that $a^s_{ii} \in J(R)\subseteq \Delta(R)$ for all $1 \le i \le n$, whence $a_{ii} \in \sqrt{\Delta(R)}$ for all $1 \le i \le n$. If, however, $a_{ii} \in \sqrt{\Delta(R)}$ for all $1 \le i \le n$, then, for each $1 \le i \le n$, there is $s_i \in \mathbb{N}$ such that $a^{s_i}_{ii} \in \Delta(R)$. Set $t:=\max\{s_i\mid 1 \le i \le n\}$, and then $a^t_{ii} \in \Delta(R)$ for all $1 \le i \le n$. Now, we calculate
\[
	X^t = \begin{pNiceArray}{cccc}
		a^t_{11} &  & \Block{2-2}<\Large>{\mathbf{\ast}} \\
		& a^t_{22} \\
		\Block{2-2}<\Large>{0} && \ddots &  \\
		&&  & a^t_{nn}
	\end{pNiceArray}
	= \begin{pNiceArray}{cccc}
		a^t_{11} &  & \Block{2-2}<\Large>{0} \\
		& a^t_{22} \\
		\Block{2-2}<\Large>{0} && \ddots &  \\
		&&  & a^t_{nn}
	\end{pNiceArray}
	+ \begin{pNiceArray}{cccc}
		0 &  & \Block{2-2}<\Large>{\mathbf{\ast}} \\
		& 0 \\
		\Block{2-2}<\Large>{0} && \ddots &  \\
		&&  & 0
	\end{pNiceArray}.
\]
Observe that the first right hand side matrix belongs to $\Delta({\rm M}_n(R))$ exploiting \cite[Corollary 9]{lm}, while the second one is manifestly nilpotent. Now, \cite[Lemma 2.1(8)]{DDE} tells us that $X^t \in \sqrt{\Delta({\rm M}_n(R))}$, and hence $X \in \sqrt{\Delta({\rm M}_n(R))}$, as needed.
\end{proof}

\begin{lemma}\label{nil}
Let $R$ be a ring. Then, we have
${\rm Nil}(R) + J(R) \subseteq \sqrt{\Delta(R)}$.
\end{lemma}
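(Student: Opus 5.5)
Take $x = b + j$ with $b \in \mathrm{Nil}(R)$ and $j \in J(R)$. The plan is to work modulo the Jacobson radical and then lift back to $R$, rather than manipulate powers of $b+j$ directly. Passing to $\overline{R} = R/J(R)$, the image $\bar x = \bar b$ is nilpotent, so $\bar b^{\,n} = 0$ for some $n \ge 1$. Expanding $(b+j)^n$ then gives
\[
x^n = b^n + (\text{a sum of words each containing at least one factor } j),
\]
and every such word lies in $J(R)$ because $J(R)$ is a two-sided ideal. Since $b^n = 0$ as well, we get $x^n \in J(R)$ directly.

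More cleanly, one just observes that $x^n + J(R) = \bar x^{\,n} = \bar b^{\,n} = 0$ in $\overline{R}$. Hence $x^n \in J(R)$.

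Next we invoke the inclusion $J(R) \subseteq \Delta(R)$, recalled in the introduction from \cite{lm}. This yields $x^n \in \Delta(R)$, so $x \in \sqrt{\Delta(R)}$ by the very definition of $\sqrt{\Delta(R)}$.

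There is no real obstacle here. The only point needing care is that $b$ and $j$ need not commute, which is why we pass to the quotient $R/J(R)$ instead of expanding binomially; the ideal property of $J(R)$ handles everything. Alternatively, one could cite \cite[Lemma 2.1(7)]{DDE}, as in the proof of Lemma \ref{frac}: $b \in \mathrm{Nil}(R) \subseteq \sqrt{\Delta(R)}$ and $j \in J(R) \subseteq \Delta(R)$, so $b + j \in \sqrt{\Delta(R)}$.
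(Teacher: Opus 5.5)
Your proof is correct and takes essentially the same approach as the paper's: take $n$ with $b^n=0$, note that $(b+j)^n\in J(R)$ because every other word in the noncommutative expansion contains a factor $j$ (equivalently, pass to $R/J(R)$), and conclude using $J(R)\subseteq\Delta(R)$. The paper simply says this is easily seen, and your remark about the two-sided ideal property supplies the justification it omits.
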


\begin{proof}
Choose $q + j \in {\rm Nil}(R) + J(R)$, where $q \in {\rm Nil}(R)$ with $q^n = 0$ and $j \in J(R)$. It is easily seen that $(q + j)^n \in J(R)\subseteq \Delta(R)$, which insures $q + j \in \sqrt{\Delta(R)}$, as expected.
\end{proof}

\begin{proposition}\label{lemma4}
Let $R$ be a ring, and put $A := \begin{pmatrix} X & C \\ 0 & Y \end{pmatrix}$ be a block matrix where $X \in \sqrt{\Delta({\rm M}_s(R))}$ and $Y \in \sqrt{\Delta({\rm M}_t(R))}$. Then, $A \in \sqrt{\Delta({\rm M}_{s+t}(R))}$.
\end{proposition}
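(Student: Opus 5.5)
The plan is to imitate the proof of Proposition~\ref{lemma2}, working with blocks instead of scalar entries. Choose $p,q\in\mathbb{N}$ with $X^{p}\in\Delta({\rm M}_s(R))$ and $Y^{q}\in\Delta({\rm M}_t(R))$, and put $m:=\max\{p,q\}$. Since $\Delta$ is closed under multiplication, $X^m=X^{m-p}X^p$ is not automatically in $\Delta$. So I will first check, via \cite[Lemma 2.1]{DDE} or directly from the subring property of $\Delta$ together with the description $\Delta({\rm M}_s(R))=M_s(J(R))$ from Lemma~\ref{2.3}, that one can pass to a common exponent. Then $X^m\in\Delta({\rm M}_s(R))$ and $Y^m\in\Delta({\rm M}_t(R))$. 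The simplest route is to take $m=pq$, so that $X^m=(X^p)^q$ and $Y^m=(Y^q)^p$, each a power of an element of a subring and hence in that subring.

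Next, block multiplication gives
\[
A^m=\begin{pmatrix} X^m & C' \\ 0 & Y^m\end{pmatrix}=\begin{pmatrix} X^m & 0 \\ 0 & Y^m\end{pmatrix}+\begin{pmatrix} 0 & C' \\ 0 & 0\end{pmatrix}=:D+N
\]
for some block $C'$. Here $N^2=0$, so $N\in{\rm Nil}({\rm M}_{s+t}(R))$.

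The key step is to show that $D\in\Delta({\rm M}_{s+t}(R))$. When $s,t\ge2$, Lemma~\ref{2.3} gives $X^m\in M_s(J(R))$ and $Y^m\in M_t(J(R))$, so $D\in M_{s+t}(J(R))=\Delta({\rm M}_{s+t}(R))$, again by Lemma~\ref{2.3}. When $s=1$ or $t=1$, the corresponding block lies only in $\Delta(R)$, and I would invoke \cite[Corollary 9]{lm}, exactly as in the proof of Proposition~\ref{lemma2}, to conclude that the block-diagonal matrix $D$ lies in $\Delta({\rm M}_{s+t}(R))$.

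With $D\in\Delta({\rm M}_{s+t}(R))$ and $N$ nilpotent, \cite[Lemma 2.1(8)]{DDE} yields $A^m=D+N\in\sqrt{\Delta({\rm M}_{s+t}(R))}$. Since some power of $A^m$, and hence of $A$, lies in $\Delta$, it follows that $A\in\sqrt{\Delta({\rm M}_{s+t}(R))}$. The hard part is the membership $D\in\Delta$ in the degenerate case where a block has size $1$, because there Lemma~\ref{2.3} does not reduce everything to $J(R)$. If \cite[Corollary 9]{lm} turns out not to cover it directly, I would fall back on the characterization $a\in\Delta\iff 1-ua\in U$ for all units $u$, and test it on $D$ using the block structure.
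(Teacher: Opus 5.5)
\textbf{The case $s=1$ or $t=1$ is a genuine gap, and it cannot be closed.} For $s,t\ge 2$ your argument is correct and is essentially the paper's proof. There $\Delta({\rm M}_s(R))=J({\rm M}_s(R))$ is an ideal, so the paper's $k=\max\{k_1,k_2\}$ works as well as your $m=pq$. Then $A^m\in J({\rm M}_{s+t}(R))+\mathrm{Nil}({\rm M}_{s+t}(R))\subseteq\sqrt{\Delta({\rm M}_{s+t}(R))}$ by Lemma~\ref{nil}.

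The failing step is exactly the one you flagged. Since $s+t\ge 2$, Lemma~\ref{2.3} gives $\Delta({\rm M}_{s+t}(R))={\rm M}_{s+t}(J(R))$. So $D$ lies in $\Delta({\rm M}_{s+t}(R))$ only if its $1\times 1$ block lies in $J(R)$; membership in $\Delta(R)$ is not enough. More generally, when $s=1$ the $(1,1)$ entry of every power $A^k$ is $X^k$, so you would need some power of $X$ in $J(R)$, i.e.\ $X\in\sqrt{J(R)}$. Neither Corollary 9 of \cite{lm} nor the test ``$1-UD$ is a unit for all units $U$'' can supply this, because the conclusion itself is false.

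\textbf{Counterexample.} Let $R=\mathbb{Z}_{(p)}[x]$, with $\mathbb{Z}_{(p)}$ the localization of $\mathbb{Z}$ at a prime $p$. Then $J(R)=0$ and $U(R)=U(\mathbb{Z}_{(p)})$. Moreover $p\in\Delta(R)$, since $1-up\in 1+p\mathbb{Z}_{(p)}\subseteq U(R)$ for every $u\in U(R)$. Take $s=t=1$, $X=p$, $Y=C=0$. Because ${\rm M}_2(J(R))=0$, we get $\sqrt{\Delta({\rm M}_2(R))}=\{B : B^k\in{\rm M}_2(J(R))\text{ for some }k\}=\mathrm{Nil}({\rm M}_2(R))$. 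But $A=\mathrm{diag}(p,0)$ is not nilpotent.

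So the proposition needs either $s,t\ge 2$, or the extra hypothesis that any $1\times 1$ diagonal block lies in $\sqrt{J(R)}$ (for instance $\Delta(R)=J(R)$).

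The paper's printed proof has the same hole. It writes $X^{k_1}\in\Delta({\rm M}_s(R))=J({\rm M}_s(R))$ without restricting $s$, although Lemma~\ref{2.3} only covers $s\ge 2$. The analogous appeal to Corollary 9 of \cite{lm} in Proposition~\ref{lemma2} fails on the same matrix $\mathrm{diag}(p,0)$. Also, the proof of Theorem~\ref{main theorem} applies Proposition~\ref{block} with a $1\times 1$ block $d$, so it too rests on the unproved case.
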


\begin{proof}
Since $X \in \sqrt{\Delta({\rm M}_s(R))}$ and $Y \in \sqrt{\Delta({\rm M}_t(R))}$, there are $k_1,k_2 \in \mathbb{N}$ such that $X^{k_1} \in \Delta({\rm M}_s(R))=J({\rm M}_s(R))$ and $Y^{k_2} \in \Delta({\rm M}_t(R))=J({\rm M}_t(R))$. Setting $k := \max\{k_1,k_2\}$, we compute
\[
	A^k = \begin{pmatrix} X^k & 0 \\ 0 & Y^k \end{pmatrix} + \begin{pmatrix} 0 & * \\ 0 & 0 \end{pmatrix} \in J({\rm M}_{s+t}(R)) + \operatorname{Nil}({\rm M}_{s+t}(R)).
\]
Therefore, Lemma \ref{nil} employs to get that $A \in \sqrt{\Delta({\rm M}_{s+t}(R))}$, as intended.
\end{proof}

\begin{proposition}\label{block}
Let
$
	M = \begin{pmatrix} X & C \\ D & Y \end{pmatrix} \in R = M_n(S),
$
where \(X \in M_k(S)\) and \(Y \in M_{n-k}(S)\) with \(1 \le k \le n-1\).
If \(X \in \sqrt{\Delta}F(M_k(S))\) and \(Y \in \sqrt{\Delta}F(M_{n-k}(S))\), then \(M \in \sqrt{\Delta}F(R)\).
\end{proposition}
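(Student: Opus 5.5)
The plan is to split \(M\) into a block lower triangular unit plus a block upper triangular element of \(\sqrt{\Delta(R)}\). This is the analogue of the Călugăreanu--Lam argument for fine rings.

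First, fix decompositions \(X = U_1 + A_1\) and \(Y = U_2 + A_2\) with
\[
U_1 \in U(M_k(S)),\quad A_1 \in \sqrt{\Delta(M_k(S))},\quad U_2 \in U(M_{n-k}(S)),\quad A_2 \in \sqrt{\Delta(M_{n-k}(S))}.
\]
These exist by the hypothesis \(X \in \sqrt{\Delta}F(M_k(S))\) and \(Y \in \sqrt{\Delta}F(M_{n-k}(S))\). Note that \(X \neq 0\), since elements of \(\sqrt{\Delta}F\) are nonzero by definition, and hence \(M \neq 0\).

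Next, write
\[
M = \begin{pmatrix} U_1 & 0 \\ D & U_2 \end{pmatrix} + \begin{pmatrix} A_1 & C \\ 0 & A_2 \end{pmatrix}.
\]
The first summand is block lower triangular with invertible diagonal blocks, so it is a unit of \(R\). Its inverse is explicitly
\[
\begin{pmatrix} U_1^{-1} & 0 \\ -U_2^{-1} D U_1^{-1} & U_2^{-1} \end{pmatrix},
\]
which is checked by direct multiplication. The second summand is block upper triangular with diagonal blocks \(A_1 \in \sqrt{\Delta(M_k(S))}\) and \(A_2 \in \sqrt{\Delta(M_{n-k}(S))}\). By Proposition \ref{lemma4} with \(s = k\) and \(t = n-k\), it lies in \(\sqrt{\Delta(M_n(S))}\). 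Hence \(M\) is a unit plus an element of \(\sqrt{\Delta(R)}\), so \(M \in \sqrt{\Delta}F(R)\).

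The main obstacle is the legitimacy of applying Proposition \ref{lemma4} when \(k = 1\) or \(n-k = 1\). In that case a diagonal block lives in \(M_1(S) = S\), where \(\Delta(S)\) need not equal \(J(S)\), whereas the proof of Proposition \ref{lemma4} used \(\Delta = J\) for the blocks. If this case needs separate treatment, I would argue as in Proposition \ref{lemma2}. Take a common exponent \(m\) with \(A_1^m\) and \(A_2^m\) in the respective \(\Delta\)'s. Split the \(m\)-th power of the second summand as
\[
\begin{pmatrix} A_1 & C \\ 0 & A_2 \end{pmatrix}^{m} = \operatorname{diag}(A_1^m, A_2^m) + N,
\]
where \(N\) is strictly block upper triangular and hence nilpotent. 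Then place \(\operatorname{diag}(A_1^m, A_2^m)\) in \(\Delta(M_n(S))\) via \cite[Corollary 9]{lm}, exactly as in the proof of Proposition \ref{lemma2}. Finally, conclude with \cite[Lemma 2.1(8)]{DDE} that the sum lies in \(\sqrt{\Delta(M_n(S))}\), and therefore so does the second summand itself.
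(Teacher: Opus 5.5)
Your main argument is the paper's proof. It uses the same splitting into a block lower triangular unit plus $\begin{pmatrix} A_1 & C\\ 0 & A_2\end{pmatrix}$, and the same appeal to Proposition \ref{lemma4} for the second summand. You are right to distrust Proposition \ref{lemma4} when $k=1$ or $n-k=1$, but your fallback does not repair it. Since $n\ge 2$, Lemma \ref{2.3} gives $\Delta(M_n(S))=M_n(J(S))$. So $\operatorname{diag}(A_1^m,A_2^m)\in\Delta(M_n(S))$ forces $A_1^m\in J(S)$ when $k=1$, and this does not follow from $A_1^m\in\Delta(S)$. No citation can supply it: the step in the proof of Proposition \ref{lemma2} you want to imitate is exactly the faulty one. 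Concretely, let $S=F[[x]][y]$ with $F$ a field. Then $U(S)=U(F[[x]])$, hence $\Delta(S)=xF[[x]]$, while $J(S)=0$. So $\sqrt{\Delta(M_2(S))}$ is the set of nilpotent matrices, and $\operatorname{diag}(x,0)$ is not in it, since its powers $\operatorname{diag}(x^t,0)$ never vanish, although $x,0\in\sqrt{\Delta(S)}$. Thus Proposition \ref{lemma4} with $s=t=1$, and the ``if'' half of Proposition \ref{lemma2}, are false as stated.

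More precisely, your second summand lies in $\sqrt{\Delta(M_n(S))}=\{B : B^t\in M_n(J(S)) \text{ for some } t\}$ if and only if its image in $M_n(S/J(S))$ is nilpotent. That happens if and only if both diagonal blocks are nilpotent modulo $J(S)$. For a block of size at least $2$ this is automatic, because $\sqrt{\Delta(M_r(S))}=\sqrt{M_r(J(S))}$ for $r\ge 2$. For a $1\times 1$ block it means $A_1\in\sqrt{J(S)}$, which is strictly stronger than $A_1\in\sqrt{\Delta(S)}$. So your argument, like the paper's, proves the proposition only in two situations: when $k\ge 2$ and $n-k\ge 2$, or when the $\sqrt{\Delta}$-part of every $1\times 1$ diagonal block can be chosen in $\sqrt{J(S)}$. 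The remaining case contains every instance with $n=2$, as well as the case $k=n-1$ used in Theorem \ref{main theorem}. It needs a genuinely different idea or an extra hypothesis. Note that a $\sqrt{\Delta}$-fine $S$ is simple, so $J(S)=0$ there, and the requirement becomes that the entry $d$ be fine. This is closely tied to the paper's open problem.
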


\begin{proof}
Since \(X\) and \(Y\) are $\sqrt{\Delta}$-fine, we can write
\[
	X = U + A \quad \text{and} \quad Y = U' + A'
\]
for some \(U \in U(M_k(S))\), \(U' \in U(M_{n-k}(S))\), and \(A \in \sqrt{\Delta(M_k(S))}\), \(A' \in \sqrt{\Delta(M_{n-k}(S))}\).
	
Now, consider the decomposition
	
\[
	M = \begin{pmatrix} U & 0 \\ D & U' \end{pmatrix} + \begin{pmatrix} A & C \\ 0 & A' \end{pmatrix}.
	\qquad\qquad\qquad\qquad\qquad (*)
\]
	
One verifies that the first matrix on the right-hand side is block triangular with invertible diagonal blocks, whence it is invertible in \(R\).
The second matrix is in $\sqrt{\Delta(M_n(S))}$ as Proposition \ref{lemma4} suggests. Thus, $(*)$ is a $\sqrt{\Delta}$-fine decomposition of \(M\) in \(R\), proving that \(M \in \sqrt{\Delta}F(R)\), as formulated.
\end{proof}

Further, Proposition \ref{block} applied inductively on the number of diagonal blocks, leads to the following useful fact.

\begin{corollary}\label{Block}
Let \( M = (A_{ij}) \in R = M_n(S) \) be a block matrix with each diagonal block \( A_{ii} \) \(\sqrt{\Delta}\)-fine. Then, \( M \) belongs to \(\sqrt{\Delta}F(R)\).
\end{corollary}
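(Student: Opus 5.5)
We argue by induction on the number $m$ of diagonal blocks of the partition of $M$. Write the partition of $\{1,\dots,n\}$ into consecutive index blocks of sizes $n_1,\dots,n_m$ with $n_1+\cdots+n_m=n$, so that $A_{ii}\in M_{n_i}(S)$ and, by hypothesis, $A_{ii}\in\sqrt{\Delta}F(M_{n_i}(S))$ for every $i$. If $m=1$, then $M=A_{11}$ and there is nothing to prove. If $m=2$, the statement is exactly Proposition~\ref{block} with $k=n_1$, $X=A_{11}$, $Y=A_{22}$, $C=A_{12}$ and $D=A_{21}$.

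For the inductive step, assume that $m\ge 3$ and that the claim holds for all block matrices with $m-1$ diagonal blocks, over matrix rings of any size. We regroup $M$ as a $2\times 2$ block matrix
\[
M=\begin{pmatrix} X & C\\ D & Y\end{pmatrix},
\]
where $X$ is the upper-left $(n-n_m)\times(n-n_m)$ corner formed by the blocks $A_{ij}$ with $1\le i,j\le m-1$, and $Y=A_{mm}\in M_{n_m}(S)$. Then $X$ is itself a block matrix in $M_{n-n_m}(S)$ with $m-1$ diagonal blocks $A_{11},\dots,A_{m-1,m-1}$, each of which is $\sqrt{\Delta}$-fine by assumption. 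The induction hypothesis therefore gives $X\in\sqrt{\Delta}F(M_{n-n_m}(S))$. Since $Y=A_{mm}$ is $\sqrt{\Delta}$-fine by hypothesis, Proposition~\ref{block} applies with $k=n-n_m$, which satisfies $1\le k\le n-1$, and yields $M\in\sqrt{\Delta}F(R)$.

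There is no real obstacle here; the only point needing care is that the induction hypothesis must be quantified over all sizes $n$ and all partitions, so that it can be applied to the smaller corner $X$. The substantive work, namely that the two off-diagonal pieces $C$ and $D$ can be absorbed into a unit and an element of $\sqrt{\Delta}$ respectively, is already done in Proposition~\ref{block} via Proposition~\ref{lemma4}.
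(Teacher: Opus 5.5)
Your argument is correct and is exactly the paper's proof: the paper obtains this corollary by applying Proposition~\ref{block} inductively on the number of diagonal blocks, and you have written that induction out explicitly by splitting off the last diagonal block and applying the induction hypothesis to the upper-left corner. One caveat, inherited from the paper rather than introduced by you: the proof of Proposition~\ref{lemma4}, on which Proposition~\ref{block} rests, uses $\Delta(M_s(S))=J(M_s(S))$, which Lemma~\ref{2.3} supplies only for $s\ge 2$, so when a diagonal block has size $1$ and $\Delta(S)\neq J(S)$ that cited step is not actually justified.
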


\begin{definition}\label{goodform}\cite[Definition 3.5]{Clam}
For any \( n \ge 2 \), a matrix of the form
$
	\begin{pmatrix} A & \beta \\ \gamma & d \end{pmatrix} \in R = M_n(S)
$
is said to be in {\it good form} whenever \( A \in M_{n-1}(S) \) and \( d \in S \) are both nonzero.
\end{definition}

We, thereby, come to the following principal result.

\begin{theorem}\label{main theorem}
If \( S \) is a \(\sqrt{\Delta}\)-fine ring, then the matrix ring \( M_n(S) \) is also \(\sqrt{\Delta}\)-fine for every \( n \ge 1 \).
\end{theorem}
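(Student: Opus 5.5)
We follow the strategy of C\u{a}lug\u{a}reanu--Lam for fine rings and argue by induction on \(n\). The case \(n=1\) is the hypothesis. Assume \(n\ge 2\) and that \(M_k(S)\) is \(\sqrt{\Delta}\)-fine for every \(1\le k\le n-1\).

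The first step is to settle matrices in good form (Definition~\ref{goodform}). Let \(M=\begin{pmatrix} A & \beta\\ \gamma & d\end{pmatrix}\) with \(A\in M_{n-1}(S)\) and \(d\in S\) both nonzero. By the induction hypothesis, \(A\in\sqrt{\Delta}F(M_{n-1}(S))\), and since \(S\) is \(\sqrt{\Delta}\)-fine, \(d\in\sqrt{\Delta}F(S)\). Proposition~\ref{block} with \(k=n-1\) then gives \(M\in\sqrt{\Delta}F(M_n(S))\). More generally, Corollary~\ref{Block} shows that any matrix admitting some block partition with all diagonal blocks nonzero is \(\sqrt{\Delta}\)-fine, because each nonzero diagonal block is \(\sqrt{\Delta}\)-fine by induction.

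The second step reduces an arbitrary nonzero \(M\in M_n(S)\) to good form by a similarity \(M\mapsto PMP^{-1}\) with \(P\in U(M_n(S))\). This reduction is legitimate because \(\sqrt{\Delta}F\) is stable under conjugation by units: Proposition~\ref{2}(2) says exactly this, and it rests on \cite[Lemma 2.1(10)]{DDE}, namely that \(\sqrt{\Delta}\) is conjugation invariant. It therefore suffices to show that every nonzero matrix is similar to one in good form. For this we plan a case analysis.

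\emph{Case 1.} Some diagonal entry \(m_{ii}\neq 0\). Conjugate by a permutation matrix moving \(i\) to position \(n\), so that \(d\neq 0\). If the resulting \(A\) is nonzero we are done. If \(A=0\) (which requires \(n\ge 2\)), conjugate by an elementary transvection \(E=I+xe_{jn}\) with \(j<n\) and suitable \(x\), for instance \(x=1\). One checks that this creates a nonzero entry inside the \((n-1)\times(n-1)\) block, for example \(x\,d\) or \(d\) placed in row \(j\), while keeping the \((n,n)\) entry nonzero. In any event, a block decomposition with nonzero diagonal blocks is enough for Corollary~\ref{Block}.

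\emph{Case 2.} All diagonal entries are zero but some \(m_{ij}\neq 0\) with \(i\ne j\). Conjugating by \(I+e_{ji}\) changes the \((i,i)\) entry to \(m_{ij}\) plus terms; more precisely, \((I+e_{ji})M(I-e_{ji})\) has \((j,j)\) entry \(m_{ij}-m_{ji}\cdot 0\ldots\). The order of \(i,j\) and the sign will be fixed so that a nonzero diagonal entry appears, which reduces us to Case 1.

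We expect the main obstacle to be the bookkeeping in this similarity reduction over a noncommutative \(S\). The difficulty is guaranteeing simultaneously that \(d\ne0\) and \(A\ne0\) after the transvection, since cancellations such as \(m_{ij}-m_{ji}=0\) may occur. We will handle this with the subcase trick used in \cite{Clam}: if one choice of transvection cancels, use \(I+e_{ij}\) instead, or first permute. Where a direct good form is awkward, we fall back on the flexibility of Corollary~\ref{Block} with a finer partition, for example \(1+1+(n-2)\) diagonal blocks. Since \(S\) is simple by Proposition~\ref{simple}, no ideal-theoretic obstruction arises, and the whole argument only needs existence of a nonzero diagonal entry in a suitable block pattern.
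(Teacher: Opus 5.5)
Your architecture (induction on $n$, conjugation by a unit to reach good form, then Proposition~\ref{block}) is the same as the paper's. However, the step that carries all the content is not established: that every nonzero matrix is similar to one in good form. As you state it, for arbitrary $\sqrt{\Delta}$-fine $S$, it is false.

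In Case~1 with $A=0$, left multiplication by $I+xE_{jn}$ adds $x$ times the last row $(\gamma,d)$ to row $j$. So $xd$ lands in position $(j,n)$, i.e.\ in $\beta$, not in $A$. The new upper-left block is just $x\gamma$ placed in row $j$, and right multiplication by $I-xE_{jn}$ only alters column $n$, replacing $d$ by $d-\gamma_j x$, which can vanish. Hence for $M=dE_{nn}$ this transvection leaves $A=0$, and so does $I+xE_{nj}$.

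More seriously, for $S=\mathbb{F}_2$ and $n=2$ no similarity works at all. The matrices $E_{22}$ and $\begin{pmatrix}0&1\\1&1\end{pmatrix}$ have trace $1$, so every conjugate has diagonal $(1,0)$ or $(0,1)$ and is never in good form. Since the only proper block partition of a $2\times 2$ matrix is $1+1$, your fallback to Corollary~\ref{Block} gives nothing. The obstruction is arithmetic, not ideal-theoretic, so the simplicity of $S$ does not help.

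What is missing is the fact that $1$ is a sum of two units, together with a case split. The paper gets $1=u+v$ with $u,v\in U(S)$ from Theorem~\ref{2-good}(2) when $|S|\neq 2$, and this is what it feeds into \cite[Proposition 3.9]{Clam} to put any nonzero matrix in good form. For instance, $P=\begin{pmatrix}1&1\\-v^{-1}u&1\end{pmatrix}$ has inverse $\begin{pmatrix}v&-v\\u&v\end{pmatrix}$, and $P(dE_{22})P^{-1}=\begin{pmatrix}du&dv\\du&dv\end{pmatrix}$ has both diagonal entries nonzero.

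The remaining case $S\cong\mathbb{F}_2$ needs a separate argument. The paper cites \cite[Theorem 3.1]{Clam} that $M_n(\mathbb{F}_2)$ is fine, hence $\sqrt{\Delta}$-fine since $\mathrm{Nil}\subseteq\sqrt{\Delta}$. Without the two-units ingredient and the $\mathbb{F}_2$ case split, your reduction to good form remains unproved.
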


\begin{proof}
Set \( R = M_n(S) \), and assume that \( S \) is a \(\sqrt{\Delta}\)-fine ring. The argument proceeds by induction on \( n \), the base case \( n = 1 \) being trivial. We first treat the case where \( |S| > 2 \). Regarding Theorem~\ref{2-good}(2), we can write \( 1 \in U(S) + U(S) \). For \( n \ge 2 \), take any nonzero matrix
$
	M = \begin{pmatrix} A & \beta \\ \gamma & d \end{pmatrix} \in R.
$
According to \cite[Proposition 3.9]{Clam}, we may assume that \( M \) is in good form; that is, \( A \neq 0 \) and \( d \neq 0 \). By the induction hypothesis, we know \( A \in \sqrt{\Delta}F(M_{n-1}(S)) \). Since \( S \) is \(\sqrt{\Delta}\)-fine, we also have \( d \in \sqrt{\Delta}F(S) \). Furthermore, Proposition~\ref{block} allows us to conclude that \( M \in \sqrt{\Delta}F(R) \). It, thus, remains to consider the case when \( S \cong \mathbb{F}_2 \). But, then \cite[Theorem 3.1]{Clam} guarantees that $M_n(\mathbb{F}_2)$ is fine for all $n \ge 1$ and, consequently, it is also $\sqrt{\Delta}$-fine, finishing the argumentation.
\end{proof}

\begin{corollary}\label{artinian}
A ring \(R\) is semi-local \(\sqrt{\Delta}\)-fine if, and only if, it is a simple Artinian ring.
\end{corollary}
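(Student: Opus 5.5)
Both implications rest on Proposition \ref{simple} and Theorem \ref{main theorem}, together with the Wedderburn--Artin theorem.

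For the forward direction, let $R$ be semi-local and $\sqrt{\Delta}$-fine. By Proposition \ref{simple}, $R$ is a simple ring. Since $J(R)$ is a two-sided ideal and $1\notin J(R)$, simplicity forces $J(R)=0$. Semi-locality means $R/J(R)$ is semisimple Artinian, so $R$ itself is semisimple Artinian. A semisimple Artinian ring is a finite direct product of matrix rings over division rings. Simplicity leaves only one factor, since otherwise a proper factor would be a nontrivial ideal. Hence $R\cong M_n(D)$ with $D$ a division ring, i.e., $R$ is simple Artinian. Note that no fine-specific argument is needed here: Proposition \ref{simple} does all the work. (Alternatively, one may argue via Proposition \ref{centralidempotent}, but simplicity is cleaner.)

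For the converse, let $R$ be simple Artinian. By Wedderburn--Artin, $R\cong M_n(D)$ for a division ring $D$. We check that $D$ is $\sqrt{\Delta}$-fine: every nonzero $d\in D$ is a unit, so $d=d+0$ is a $\sqrt{\Delta}$-fine decomposition, since $0\in\sqrt{\Delta(D)}$. This is also Proposition \ref{1}(2), because strongly $\sqrt{\Delta}$-fine rings are $\sqrt{\Delta}$-fine. Theorem \ref{main theorem} then gives that $M_n(D)$ is $\sqrt{\Delta}$-fine. Since $\sqrt{\Delta}$-fineness is plainly invariant under ring isomorphism, $R$ is $\sqrt{\Delta}$-fine. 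Finally, an Artinian ring is semi-local, because $R/J(R)$ is semisimple Artinian, so $R$ is semi-local $\sqrt{\Delta}$-fine.

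I do not expect a genuine obstacle, since the heavy lifting sits in Theorem \ref{main theorem} and Proposition \ref{simple}. The one point to state carefully is the convention for ``semi-local'', namely that $R/J(R)$ is semisimple Artinian. Under that convention, $J(R)=0$ immediately turns semi-local into semisimple Artinian.
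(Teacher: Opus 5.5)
Your proof is correct and follows the paper's argument: Proposition \ref{simple} gives simplicity and hence $J(R)=0$, so semi-locality makes $R$ semisimple and therefore simple Artinian, while the converse goes through Wedderburn--Artin, division rings being $\sqrt{\Delta}$-fine, Theorem \ref{main theorem}, and Artinian $\Rightarrow$ semi-local. Your explicit single-factor step spells out what the paper leaves implicit, but the parenthetical alternative via Proposition \ref{centralidempotent} would not by itself give $J(R)=0$, since indecomposability does not kill the radical (a local ring is indecomposable), so the simplicity route is the one doing the work.
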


\begin{proof}
If, for a moment, \(R\) is simple Artinian, then the Wedderburn–Artin theorem from \cite{lam}, \cite{lamm} yields an isomorphism \(R \cong M_n(S)\) for some division ring \(S\) and some integer \(n \ge 1\). Moreover, division rings are trivially $\sqrt{\Delta}$-fine, and Theorem~\ref{main theorem} teaches us that the matrix ring \(M_n(S)\) inherits $\sqrt{\Delta}$-fineness; hence, \(R\) is $\sqrt{\Delta}$-fine. Besides, every simple Artinian ring is semi-local, so \(R\) is semi-local as well.

Conversely, suppose \(R\) is semi-local and $\sqrt{\Delta}$-fine. Activating Theorem~\ref{simple}, one knows that every nonzero $\sqrt{\Delta}$-fine ring is simple; thus, \(R\) is simple and, in particular, \(J(R)=0\). Since \(R\) is semi-local, the quotient \(R/J(R) \cong R\) is, by definition, a (semi-simple) Artinian ring. Consequently, \(R\) itself is simple Artinian, as suspected.
\end{proof}

Our second major result is the following one.

\begin{theorem}\label{thm}
Let \(G\) be a nonzero abelian group, and put \(R := \operatorname{End}_{\mathbb{Z}}(G)\). Then, the following statements are equivalent:
	
(1) \(R\) is a simple ring;
	
(2) \(R\) is a simple Artinian ring;
	
(3) \(R\) is a \(\sqrt{\Delta}\)-fine ring;
	
(4) \(G\) is a finite-dimensional vector space over either \(\mathbb{Q}\) or \(\mathbb{F}_p\) for some prime \(p\).
\end{theorem}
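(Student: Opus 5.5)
I will prove the cycle of implications (1)$\Rightarrow$(4)$\Rightarrow$(2)$\Rightarrow$(3)$\Rightarrow$(1). Two of these links are immediate from earlier results. The implication (3)$\Rightarrow$(1) is exactly Proposition~\ref{simple}, since every $\sqrt{\Delta}$-fine ring is simple. The implication (2)$\Rightarrow$(3) follows from Corollary~\ref{artinian}, because a simple Artinian ring is semi-local and $\sqrt{\Delta}$-fine.

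For (4)$\Rightarrow$(2), suppose $G \cong F^n$ with $F \in \{\mathbb{Q}, \mathbb{F}_p\}$. Every additive endomorphism of a $\mathbb{Q}$-vector space is automatically $\mathbb{Q}$-linear, because it is $\mathbb{Z}$-linear and divisibility gives uniqueness of division by integers. Every additive endomorphism of an elementary abelian $p$-group is $\mathbb{F}_p$-linear. Hence $\operatorname{End}_{\mathbb{Z}}(G) \cong M_n(F)$, which is simple Artinian by Wedderburn--Artin. Here $n \ge 1$ since $G \neq 0$.

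The substantive step is (1)$\Rightarrow$(4), which I expect to be the main obstacle. Assume $R = \operatorname{End}_{\mathbb{Z}}(G)$ is simple.

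First, $G$ must be either torsion-free divisible or an elementary abelian $p$-group. If $G = H \oplus K$ with both summands nonzero, the endomorphisms factoring through $H$ generate a two-sided ideal. The plan is to show this ideal is proper, i.e.\ that it fails to contain the identity exactly when $\operatorname{Hom}$ between the summands is too small. Rather than work at that level of generality, I would argue via the center. The center of a simple ring is a field, and multiplication by an integer $m$ is central in $R$. So each $m$ acts on $G$ either as an automorphism or as zero. If some $m \neq 0$ acts as zero, take the minimal such positive $m$; then the prime $p \mid m$ acts as $0$ or as an automorphism, and a minimality argument forces $pG = 0$. In that case $G$ is an $\mathbb{F}_p$-vector space. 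Otherwise every nonzero integer acts bijectively, so $G$ is a $\mathbb{Q}$-vector space.

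Second, I need to exclude infinite dimension. If $G = V$ is infinite-dimensional over $F$, the finite-rank endomorphisms form a proper nonzero two-sided ideal of $\operatorname{End}_F(V)$. It is nonzero since $V \neq 0$, and proper since $1$ does not have finite rank. This contradicts simplicity. Hence $\dim_F G < \infty$, which is (4).

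The step I will check carefully is the central-element argument. Specifically, I must verify that the center of a simple ring is a field: for nonzero central $c$, the ideal $cR$ is two-sided, so $cR = R$ and $c$ is invertible. Then for a nonzero integer $m$ acting as $m \cdot 1 \in C(R)$, the action is either $0$ or invertible. If $m \cdot 1 = 0$ for some $m > 0$, the characteristic of $R$ is a prime $p$, since a simple ring has prime characteristic or characteristic zero (its center is a field). That forces $pG = 0$. If no such $m$ exists, every nonzero integer acts invertibly on $G$, which makes $G$ uniquely divisible and hence a $\mathbb{Q}$-vector space.
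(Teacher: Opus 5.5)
Your proof is correct. For the two links that involve $\sqrt{\Delta}$ it coincides with the paper's: (3)$\Rightarrow$(1) is Proposition~\ref{simple} and (2)$\Rightarrow$(3) is Corollary~\ref{artinian}.

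The difference lies in the purely ring-theoretic part. The paper settles (1)$\Leftrightarrow$(2)$\Leftrightarrow$(4) in one stroke by citing Fuchs' classical theorem on simple endomorphism rings. You instead prove (1)$\Rightarrow$(4) and (4)$\Rightarrow$(2) by hand, and your cycle (1)$\Rightarrow$(4)$\Rightarrow$(2)$\Rightarrow$(3)$\Rightarrow$(1) then yields all the equivalences.

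Your (1)$\Rightarrow$(4) is sound in its final form:
\begin{itemize}
\item The image of $\mathbb{Z}\to R$ lies in $C(R)$, which is a field because $R$ is simple. Hence $\operatorname{char}R$ is $0$ or a prime $p$.
\item If $\operatorname{char}R=0$, every nonzero integer acts invertibly on $G$, so $G$ is a $\mathbb{Q}$-space. If $\operatorname{char}R=p$, then $pG=0$.
\item In either case $\operatorname{End}_{\mathbb{Z}}(G)=\operatorname{End}_F(G)$, and the proper nonzero ideal of finite-rank maps rules out infinite dimension.
\end{itemize}
Your preliminary remarks about $G=H\oplus K$ and a ``minimal $m$'' are superseded by the characteristic argument and can simply be dropped.

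The paper's route is shorter because it outsources the structure theory of $\operatorname{End}_{\mathbb{Z}}(G)$ to a standard reference. Yours is self-contained and elementary. It shows that the only input needed from abelian group theory is that integer multiplications are central in $\operatorname{End}_{\mathbb{Z}}(G)$. Beyond that, you only need that a simple ring has a field as its center and that finite-rank operators form an ideal.
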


\begin{proof}
The equivalencies (1) \(\Leftrightarrow\) (2) \(\Leftrightarrow\) (4) are a classical result due to Fuchs (see \cite[Theorem 111.2]{Fuch}). For the implication (3) \(\Rightarrow\) (1), we notice that every \(\sqrt{\Delta}\)-fine ring is simple in regard to Theorem~\ref{simple}.

Conversely, if \(R\) is simple Artinian, then Corollary~\ref{artinian} informs us that \(R\) is \(\sqrt{\Delta}\)-fine, thus yielding (2) \(\Rightarrow\) (3). So, all four conditions are equivalent, as asked for.
\end{proof}

\begin{lemma}\label{semisimple}
A semi-simple ring is \(\sqrt{\Delta}\)-fine if, and only if, it is simple.
\end{lemma}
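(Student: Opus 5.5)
Both implications follow from results already established, so the plan is to assemble them.

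For the forward direction, suppose $R$ is semi-simple and $\sqrt{\Delta}$-fine. By Proposition~\ref{simple}, every $\sqrt{\Delta}$-fine ring is simple, so $R$ is simple and nothing more is needed. Alternatively, one can argue through Proposition~\ref{centralidempotent}. Write $R \cong M_{n_1}(D_1)\times\cdots\times M_{n_k}(D_k)$ by Wedderburn--Artin. If $k\ge 2$, the central idempotent $e=(1,0,\dots,0)$ is nonzero and nontrivial, and that proposition says no such element is $\sqrt{\Delta}$-fine. This contradicts the hypothesis, so $k=1$ and $R\cong M_{n_1}(D_1)$ is simple.

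For the reverse direction, suppose $R$ is semi-simple and simple. A semi-simple ring is Artinian, so $R$ is simple Artinian. Corollary~\ref{artinian} then gives that $R$ is $\sqrt{\Delta}$-fine. Concretely, Wedderburn--Artin writes $R\cong M_n(D)$ with $D$ a division ring. Division rings are $\sqrt{\Delta}$-fine, since every nonzero element is a unit plus $0$, and Theorem~\ref{main theorem} transfers this property to $M_n(D)$.

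There is no real obstacle. The only point needing care is that ``semi-simple'' here means semi-simple Artinian, that is, $J(R)=0$ together with the Artinian condition. This is what makes the Wedderburn--Artin decomposition, and hence Corollary~\ref{artinian}, available. If the paper instead meant only Jacobson semi-simple, the reverse direction would require simple Artinian rather than merely simple, so I will state explicitly that the classical (Artinian) meaning is intended.
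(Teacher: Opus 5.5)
Your proposal is correct and follows essentially the same route as the paper: the forward direction is Proposition~\ref{simple}, and the reverse direction is Wedderburn--Artin giving $R\cong M_n(D)$, followed by Theorem~\ref{main theorem}. Only the division-ring case of that theorem is needed, and that case also follows directly from C\u{a}lug\u{a}reanu--Lam's result that $M_n(D)$ is fine, since $\mathrm{Nil}(R)\subseteq\sqrt{\Delta(R)}$.
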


\begin{proof}
One direction is trivial and was shown in Theorem \ref{simple}; we, therefore, concentrate on the converse one. By the classical Wedderburn–Artin theorem (cf. \cite{lam}, \cite{lamm}), any semi-simple ring \(R\) is isomorphic to a direct product of matrix rings over division rings, i.e.,
$
	R \cong \prod_{i=1}^{k} M_{n_i}(D_i),
$
where each \(D_i\) is a division ring.
If, foremost, \(R\) is simple, then this product must consist of a single factor; whence \(R \cong M_n(D)\) for some division ring \(D\) and some \(n \ge 1\). Viewing Theorem \ref{main theorem}, every matrix ring over a division ring is \(\sqrt{\Delta}\)-fine. Consequently, \(R\) itself is \(\sqrt{\Delta}\)-fine, as requested.
\end{proof}

\begin{lemma}\label{ff}
If \( A = (a_{ij}) \in M_n(S) \) is a \(\sqrt{\Delta}\)-fine matrix, then
$
	\sum_{i,j} S a_{ij} S = S.
$
In particular, \( A = a E_{ij} \in \sqrt{\Delta}F(R) \) yields \( SaS = S \).
\end{lemma}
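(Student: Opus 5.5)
Let $I:=\sum_{i,j} S a_{ij} S$, the two-sided ideal of $S$ generated by the entries of $A$. The approach is the reduction-modulo-an-ideal argument of Proposition \ref{simple}, carried out at the matrix level. The key observation is that $M_n(I)$ is a two-sided ideal of $R=M_n(S)$ containing $A$, so $RAR\subseteq M_n(I)$. By Proposition \ref{simple}, applied to $r=A\in\sqrt{\Delta}F(R)$, we have $RAR=R$. Hence $M_n(I)=R=M_n(S)$. Comparing the $(1,1)$ entries, for instance $I_n\in M_n(I)$, gives $1\in I$, and therefore $I=S$.

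For robustness, I would also unwind this directly, mirroring the proof of Proposition \ref{simple}. Pass to $\overline{S}=S/I$ and the natural surjection $M_n(S)\to M_n(\overline{S})$. Write $A=U+D$ with $U\in U(R)$ and $D\in\sqrt{\Delta(R)}$. The image of $A$ is $0$, so $\overline{U}=-\overline{D}$ is a unit of $M_n(\overline{S})$. Moreover, $\overline{D}$ lies in $\sqrt{\Delta(M_n(\overline{S}))}$, because surjective homomorphisms carry $\Delta$ into $\Delta$; this is the same fact used in Proposition \ref{quotient}. By \cite[Remark 2.2(4)]{DDE}, a unit lying in $\sqrt{\Delta}$ forces the ring to be zero. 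Thus $M_n(\overline{S})=0$, so $\overline{S}=0$ and $I=S$.

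The ``in particular'' clause is immediate. If $A=aE_{ij}$, the only possibly nonzero entry is $a$, so $\sum S a_{kl} S = SaS$, and the first part gives $SaS=S$.

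The only step needing care is the claim that $\sqrt{\Delta}$ is preserved under surjections. The first argument sidesteps it entirely by quoting Proposition \ref{simple} together with the standard correspondence between ideals of $M_n(S)$ and matrix rings $M_n(I)$ over ideals of $S$. I would therefore present that argument as the main proof.
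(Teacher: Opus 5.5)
Your main argument is correct, and it takes a different route from the paper. The paper does not pass through Proposition \ref{simple}. Instead it reruns the quotient argument in $M_n(S/J)$:
\begin{itemize}
\item Lemma \ref{2.3} identifies $\sqrt{\Delta(M_n(S))}$ with $\sqrt{J(M_n(S))}$;
\item Lemma \ref{3} pushes this into $\sqrt{J(M_n(\overline{S}))}=\sqrt{\Delta(M_n(\overline{S}))}$;
\item \cite[Remark 2.2(4)]{DDE} finishes the proof.
\end{itemize}
Your reduction $R=RAR\subseteq M_n(I)$ is shorter and works uniformly for all $n\ge 1$. Lemma \ref{2.3} needs $n\ge 2$, and for $n=1$ the identification $\sqrt{\Delta(S)}=\sqrt{J(S)}$ can fail; that case is really just Proposition \ref{simple}.

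Your ``robustness'' paragraph, however, rests on a false claim. A surjective homomorphism need not carry $\Delta$ into $\Delta$, nor $\sqrt{\Delta}$ into $\sqrt{\Delta}$, because units of the image need not lift. Take $R=\mathbb{Z}_{(p)}[x]$. Then $U(R)=U(\mathbb{Z}_{(p)})$, so $p\in\Delta(R)$, while $J(R)=0$. Yet $R\to R/(px-1)\cong\mathbb{Q}$ sends $p$ to a nonzero unit, which is not in $\sqrt{\Delta(\mathbb{Q})}=0$. This example also illustrates the $n=1$ remark above. For $n\ge 2$ the step is rescued exactly as the paper does it, via $\Delta(M_n(S))=M_n(J(S))$ and the fact that $J$ does map into $J$.

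Quoting Proposition \ref{simple} does not by itself ``sidestep'' this issue, since the paper's proof of that proposition uses the same unjustified step. The proposition is nonetheless true, by a direct argument:
\begin{itemize}
\item Let $r=u+a$ with $u\in U(R)$ and $a^k\in\Delta(R)$.
\item Since $-(-u)^k$ is a unit, $a^k-(-u)^k\in U(R)$.
\item In $R/RrR$ we have $\bar a=-\bar u$, so the image of this unit is $\bar a^{\,k}-(-\bar u)^k=0$.
\item Hence $R/RrR=0$.
\end{itemize}
With this supplied, your main proof is complete.
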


\begin{proof}
Set \( J := \sum_{i,j} S a_{ij} S \) and \( \overline{S} := S/J \). Then, the natural map \( \pi: M_n(S) \to M_n(\overline{S}) \) sends \( A \) to \( 0 \) since all \( a_{ij} \in J \). As \( A \in \sqrt{\Delta}F(R) \), we write \( A = U + B \) with \( U \in U(M_n(S))\) invertible and \( B\in \sqrt{\Delta(M_n(S))}=\sqrt{J(M_n(S))}\). Acting with \( \pi \), we get \( 0 = \pi(U) + \pi(B) \) in \( M_n(\overline{S}) \), where \( \pi(U)\in U(M_n(\overline{S})) \) and \( \pi(B)\in \sqrt{\Delta(M_n(\overline{S}))} \) as Lemma \ref{3} enables us. Therefore, \[ \pi(U)\in U(M_n(\overline{S}))\cap \sqrt{\Delta(M_n(\overline{S}))} \] must be zero adapting
\cite[remark 2.2(4)]{DDE}. Hence, \( M_n(\overline{S}) = 0 \), and so \( \overline{S} = 0 \) giving \( J = S \).

As for the special case \( A = a E_{ij} \), we have \( J = SaS \), and the all of above means \( SaS = S \), as we need.
\end{proof}

Our third basic result in this section states the following necessary and sufficient condition.

\begin{theorem}
Let \(R = M_n(S)\), where \(S\) is commutative. If \(aE_{ij} \in \sqrt{\Delta}F(R)\) for some matrix unit \(E_{ij}\), then \(a \in U(S)\). As a consequence, \(R\) is \(\sqrt{\Delta}\)-fine if, and only if, \(S\) is a field.
\end{theorem}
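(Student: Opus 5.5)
The plan is to apply Lemma~\ref{ff} and then use commutativity of \(S\). Suppose \(aE_{ij}\in\sqrt{\Delta}F(R)\). The special case of Lemma~\ref{ff} gives \(SaS=S\). Since \(S\) is commutative, \(SaS=aS\), so \(aS=S\). Hence \(ab=1\) for some \(b\in S\), and by commutativity \(a\in U(S)\). This settles the first assertion with no further work.

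For the ``if'' part of the equivalence, assume \(S\) is a field. Then \(S\) is a division ring, hence trivially \(\sqrt{\Delta}\)-fine (this is Proposition~\ref{1}(2)). Theorem~\ref{main theorem} then shows that \(R=M_n(S)\) is \(\sqrt{\Delta}\)-fine.

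For the ``only if'' part, assume \(R\) is \(\sqrt{\Delta}\)-fine and let \(0\neq a\in S\). Then \(aE_{11}\) is a nonzero element of \(R\), so \(aE_{11}\in\sqrt{\Delta}F(R)\). The first part yields \(a\in U(S)\). Thus every nonzero element of the commutative ring \(S\) is a unit. We also have \(S\neq 0\), since \(R\) is nonzero: the zero ring has no nonzero elements and we adopt the convention that the rings considered are nonzero. Therefore \(S\) is a field.

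I expect no step to be a serious obstacle. The only point needing care is that Lemma~\ref{ff} is stated with \(R\) but is really about \(M_n(S)\), and that its proof uses the identification \(\sqrt{\Delta(M_n(S))}\subseteq\sqrt{J(M_n(S))}\) through Lemma~\ref{2.3}. That identification holds for \(n\ge2\); for \(n=1\) the first claim must instead be argued directly. In that case, with \(S\) commutative, Proposition~\ref{simple} gives \(SaS=S\) for any \(a\in\sqrt{\Delta}F(S)\), and then \(a\) is a unit exactly as above. Alternatively, Proposition~\ref{2}(1) gives this immediately, since \(S=C(S)\).
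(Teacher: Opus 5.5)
Your proposal is correct and is essentially the paper's proof: Lemma~\ref{ff} plus commutativity gives the first claim, the nonzero matrix $aE_{11}$ gives the ``only if'' direction, and Theorem~\ref{main theorem} gives the ``if'' direction. Your extra care goes slightly beyond the paper and is justified: Lemma~\ref{ff}'s proof uses $\Delta(M_n(S))=J(M_n(S))$, which requires $n\ge 2$ (even for commutative $S$ one can have $\Delta(S)\neq J(S)$, e.g.\ $p\in\Delta(\mathbb{Z}_{(p)}[x])$ while $J(\mathbb{Z}_{(p)}[x])=0$), so handling $n=1$ via Proposition~\ref{2}(1) with $C(S)=S$ is the right patch, and the nonzero-ring convention is genuinely needed to exclude $S=0$.
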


\begin{proof}
Assume \( aE_{ij} \ \in \sqrt{\Delta}F(R) \). Thus, Lemma \ref{ff} works to see that \( aE_{ij} \in \sqrt{\Delta}F(R) \) implies \( SaS = S \). Since \( S \) is commutative, \( SaS = S \) gives \( a \in U(S) \).
	
''Only if''. Suppose \( R \) is \(\sqrt{\Delta}\)-fine. Take any nonzero \( a \in S \). The matrix \( aE_{11} \) (or any other \( aE_{ij} \)) is nonzero since \( a \neq 0 \) and \( E_{11} \neq 0 \). Therefore, \( aE_{11} \in \sqrt{\Delta}F(R) \). By Part 1, \( a \in U(S) \). Hence, every nonzero element of \( S \) is a unit, which means \( S \) is a field.
	
''If''. This part follows immediately from Theorem \ref{main theorem}, completing the entire arguments.
\end{proof}

\section{Group Rings of $\sqrt{\Delta}$-fine and Generalized Fine Rings}\label{sec4}

In this section, we pay attention to group rings and investigate when they are generalized fine.

\medskip

Standardly, for any ring \(R\) and any group \(G\), we write \(RG\) to stand the group ring of \(G\) over \(R\). The augmentation map \(\varepsilon: RG \to R\) is given by
\[
\varepsilon\left(\sum_{g\in G} a_g g\right) = \sum_{g\in G} a_g.
\]
Its kernel, denoted by \(\varepsilon(RG)\), is called the {\it augmentation ideal} of \(RG\).

On the other hand, a group \(G\) is called a {\it \(p\)-group} if every element has a finite order which is a power of the prime \(p\). Also, \(G\) is said to be {\it locally finite} if every finitely generated subgroup of \(G\) is finite.

\medskip

We start our examination here with the following criterion.

\begin{lemma}\label{8}
Let $R$ be a ring and $G$ a group. Then, $RG$ is a $\sqrt{\Delta}$-fine ring if, and only if, $R$ is a $\sqrt{\Delta}$-fine ring and $G$ is a trivial group.
\end{lemma}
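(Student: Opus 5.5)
The plan is to use the simplicity criterion (Proposition \ref{simple}) together with the augmentation map. Suppose first that $RG$ is $\sqrt{\Delta}$-fine. By Proposition \ref{simple}, $RG$ is a simple ring. The augmentation map $\varepsilon: RG \to R$ is a surjective ring homomorphism, and $R \neq 0$ because a $\sqrt{\Delta}$-fine ring is nonzero. So the augmentation ideal $\ker\varepsilon$ is a proper two-sided ideal of $RG$, and simplicity forces $\ker\varepsilon = 0$.

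If $G$ contained some $g \neq 1$, then $1 - g$ would be a nonzero element of $\ker\varepsilon$, since the group elements are $R$-linearly independent in $RG$. This contradiction shows $G$ is trivial. Then $RG \cong R$, and $R$ is $\sqrt{\Delta}$-fine. Alternatively, $R \cong RG/\ker\varepsilon$, so Proposition \ref{quotient} gives that $R$ is $\sqrt{\Delta}$-fine directly, without first identifying $G$.

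For the converse, if $G$ is trivial then $RG = R$ canonically, so $RG$ is $\sqrt{\Delta}$-fine whenever $R$ is.

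There is no real obstacle here. The only point needing care is checking that $1-g \neq 0$ and that $\ker\varepsilon$ is proper. The first uses that $RG$ is free over $R$ on $G$. The second holds because $\varepsilon(1) = 1 \neq 0$ in the nonzero ring $R$.
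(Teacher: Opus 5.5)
Your proof is correct, and it takes a different route from the paper's.

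\textbf{The paper's route.} It never invokes Proposition~\ref{simple}. It fixes $g\neq 1$, writes $1-g=u+d$ with $u\in U(RG)$ and $d\in\sqrt{\Delta(RG)}$, and applies $\varepsilon$ to get $-\varepsilon(u)=\varepsilon(d)\in U(R)\cap\sqrt{\Delta(R)}$, which is empty when $R\neq 0$. This element-level argument silently uses $\varepsilon(\sqrt{\Delta(RG)})\subseteq\sqrt{\Delta(R)}$. That inclusion does hold: $\varepsilon$ is split by $R\hookrightarrow RG$, so each $v\in U(R)$ is a unit of $RG$, and $d^n\in\Delta(RG)$ gives $1-v\,\varepsilon(d)^n=\varepsilon(1-vd^n)\in U(R)$.

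\textbf{Your route.} You sidestep that compatibility question and argue more conceptually: a $\sqrt{\Delta}$-fine ring has no nonzero proper ideals, and $\ker\varepsilon\ni 1-g$ would be one. The price is that everything rests on Proposition~\ref{simple}. Citing it is legitimate, but be aware that its printed proof assumes images of $\sqrt{\Delta(R)}$ lie in $\sqrt{\Delta}$ of the quotient. That is not true for arbitrary quotients, because units of $R/I$ need not lift. For example, $p\in\Delta(\mathbb{Z}_{(p)}[x])$ becomes a unit in $\mathbb{Z}_{(p)}[x]/(px-1)\cong\mathbb{Q}$. The proposition itself is still true by a direct argument. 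Suppose $u+a\in I$ with $u\in U(R)$ and $a^n\in\Delta(R)$. Then $u^{-n}a^n\in\Delta(R)$ and $u^{-n}a^n\equiv(-1)^n \pmod I$. Hence the unit $1-(-1)^nu^{-n}a^n$ lies in $I$, forcing $I=R$.

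\textbf{The nonzero convention.} You make explicit the assumption $R\neq 0$, which the paper uses tacitly. It is genuinely needed: for $R=0$, the ring $RG=0$ is vacuously $\sqrt{\Delta}$-fine for every group $G$.
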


\begin{proof}
It suffices to show that, if $RG$ is a $\sqrt{\Delta}$-fine ring, then $G$ is the trivial group. To that purpose, assume that $1 \neq g \in G$. Then, $1-g = u + d$ for some $u \in U(RG)$ and $d \in \sqrt{\Delta(RG)}$. Applying the augmentation map, we obtain $$-\varepsilon(u) = \varepsilon(d) \in U(R) \cap \sqrt{\Delta(R)} = \varnothing,$$ which contradicts \cite[Remark 2.2(4)]{DDE}. Hence, $G$ must be a trivial group.
\end{proof}

The following statements notably does {\it not} appear in \cite{z}.

\begin{theorem}\label{88}
Let \( R \) be a ring and \( G \) a group. If \( RG \) is a generalized fine ring, then \( R \) is generalized fine and \( G \) is a \( p \)-group with \( p \in J(R) \).
\end{theorem}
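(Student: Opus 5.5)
The plan is to get both conclusions from the augmentation map $\varepsilon\colon RG\to R$, exactly as in the proof of Lemma~\ref{8}. The one fact about Jacobson radicals I will use is that $f(J(A))\subseteq J(B)$ for any surjective ring homomorphism $f\colon A\to B$ (\cite[Ex.~4.10]{lam}, as in Lemma~\ref{3}).

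\emph{First, $R$ is generalized fine.} Take $r\in R\setminus J(R)$ and view it as $r\cdot 1\in RG$. Since $\varepsilon(J(RG))\subseteq J(R)$ and $\varepsilon(r\cdot 1)=r\notin J(R)$, we get $r\cdot 1\notin J(RG)$. The hypothesis then gives $r\cdot 1=u+q$ with $u\in U(RG)$ and $q\in \mathrm{Nil}(RG)$. Applying $\varepsilon$ yields $r=\varepsilon(u)+\varepsilon(q)$, where $\varepsilon(u)\in U(R)$ and $\varepsilon(q)\in\mathrm{Nil}(R)$. Hence $R$ is generalized fine.

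\emph{Second, the augmentation ideal lies in $J(RG)$.} Let $1\neq g\in G$ and suppose $1-g\notin J(RG)$. Then $1-g=u+q$ with $u$ a unit and $q$ nilpotent. Applying $\varepsilon$ gives $0=\varepsilon(u)+\varepsilon(q)$, so $\varepsilon(u)=-\varepsilon(q)$ is simultaneously a unit and a nilpotent of $R$. This forces $R=0$, which is impossible. Therefore $1-g\in J(RG)$ for every $g\in G$. Since the elements $1-g$ generate $\ker\varepsilon$ as a left ideal and $J(RG)$ is an ideal, we obtain $\ker\varepsilon\subseteq J(RG)$.

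\emph{Third, $G$ is a $p$-group with $p\in J(R)$.} This is the classical situation treated by Connell: if the augmentation ideal of $RG$ lies in $J(RG)$, then $G$ is a $p$-group and $p\in J(R)$. I would invoke this result directly, but here is how I would verify it by hand. For any subgroup $H$, the ring $RG$ is a free $RH$-module with $RH$ a direct summand, so $J(RG)\cap RH\subseteq J(RH)$; this lets us restrict to cyclic subgroups $H=\langle g\rangle$.
\begin{itemize}
\item If $g$ has infinite order, then $RH\cong R[t,t^{-1}]$ and $1-t\in J(R[t,t^{-1}])$, so $2-t=1+(1-t)$ is a unit. Pass to a simple (or prime) homomorphic image of $R$, where units of Laurent polynomial rings are forced to be monomials up to nilpotent corrections. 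Since $2-t$ has two distinct nonzero terms, this gives a contradiction.
\item If $g$ has finite order $n$, then $(1-g)\cdot\sum_{i<n}g^i=0$ and $\sum_{i<n}g^i\equiv n \pmod{\ker\varepsilon}$. Hence $n\in J(RG)$, and applying $\varepsilon$ gives $n\in J(R)$.
\item If two distinct primes $p,q$ divided the orders of elements of $G$, then $p,q\in J(R)$, so $1=ap+bq\in J(R)$, which is absurd. Hence every element has order a power of one fixed prime $p$, and $p\in J(R)$.
\end{itemize}

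The main obstacle is the infinite-order case, i.e.\ ruling out $1-t\in J(R[t,t^{-1}])$ over a noncommutative $R$. If the direct unit argument becomes messy, I will simply cite Connell's theorem, which is also how the analogous results of \cite{z} are obtained.
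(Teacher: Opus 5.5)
Your proposal is correct and follows the paper's proof. You use the augmentation map $\varepsilon$ to push unit-plus-nilpotent decompositions down to $R$ and to show that the augmentation ideal lies in $J(RG)$ (testing only the generators $1-g$ is a harmless variation), and then, like the paper, you let Connell's Proposition 15(i) give that $G$ is a $p$-group with $p\in J(R)$. Don't rely on your optional hand verification of Connell's result, because it has gaps as written: $(1-g)\hat g=0$ together with $\hat g=\sum_{i<n}g^i\equiv n$ modulo the augmentation ideal does not by itself give $n\in J(RG)$, and in the infinite-order bullet $2-t=-t$ is a unit when $2=0$ in $R$, so no contradiction arises there.
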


\begin{proof}
First, we prove that \( R \) is generalized fine. In fact, take \( a \not\in J(R) \). Since \( J(RG) \cap R \subseteq J(R) \), we have \( a \notin J(RG) \). But, since \( RG \) is generalized fine, we can write \( a = u + t \) with \( u \in U(RG) \) and \( t \in \mathrm{Nil}(RG) \). Applying the augmentation map gives
\[
	a = \varepsilon(a) = \varepsilon(u) + \varepsilon(t) \in U(R) + \mathrm{Nil}(R).
\]
Thus, \( R \) is generalized fine indeed.
	
Next, we show that \( G \) is a \( p \)-group with \( p \in J(R) \). In fact, in virtue of \cite[Proposition 15(i)]{con}, it is enough to establish that \( \varepsilon(RG) \subseteq J(RG) \). Suppose, to the contrary, that there exists \( f \in \varepsilon(RG) \setminus J(RG) \). Since \( RG \) is generalized fine, we can write \( f = u + t \) with \( u \in U(RG) \) and \( t \in \mathrm{Nil}(RG) \). Thus,
\[
	0 = \varepsilon(f) = \varepsilon(u) + \varepsilon(t),
\]
and so \( \varepsilon(u) = -\varepsilon(t) \). But \( \varepsilon(u) \in U(R) \) and \( \varepsilon(t) \in \mathrm{Nil}(R) \), which is obviously impossible, because a unit cannot be nilpotent. This contradiction means that \( \varepsilon(RG) \subseteq J(RG) \). The result now follows at once from \cite[Proposition 15(i)]{con}.
\end{proof}

Our next results somewhat treats the converse implication which also does {\it not} appear in \cite{z}.

\begin{theorem}\label{888}
Let \( R \) be a ring and \( G \) a group. If \( G \) is a locally finite \( p \)-group with \( p \in \mathrm{Nil}(R) \), and \( R \) is generalized fine, then \( RG \) is also generalized fine.
\end{theorem}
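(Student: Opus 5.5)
The plan is to reduce everything to the augmentation map $\varepsilon: RG \to R$. The key point is that the augmentation ideal $\ker\varepsilon$ lies inside $J(RG)$ and, better, is locally nilpotent. Then we lift decompositions from $R$ to $RG$ along the section $R \hookrightarrow RG$.

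First I would establish this structural fact about the augmentation ideal. Take any finitely generated piece of the augmentation ideal. It lives in $RH$ for a finitely generated, hence finite, $p$-subgroup $H \le G$, since $G$ is locally finite. Because $p \in \mathrm{Nil}(R)$, say $p^k=0$, the classical result on finite $p$-groups applies (Connell, \cite[Proposition 15]{con} and its companion results): the augmentation ideal of $RH$ is nilpotent modulo $pRH$, and $pRH$ is a nilpotent ideal since $p$ is central. Hence the augmentation ideal $\Delta_H$ of $RH$ is nilpotent. It follows that the augmentation ideal of $RG$ is a locally nilpotent ideal, so it is contained in $J(RG)$. Each of its elements is nilpotent, and the same holds for sums of such elements with nilpotents commuting appropriately, via the local argument below.

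Next comes the main step. Take $f \in RG \setminus J(RG)$ and write $f = \varepsilon(f) + (f - \varepsilon(f))$, where $\delta := f - \varepsilon(f)$ lies in the augmentation ideal. Since $\delta \in J(RG)$ and $J(R) \subseteq J(RG)$ (this needs checking; I would use that $RG/J(R)G$ together with the local nilpotence argument gives $J(RG) = J(R)G + \ker\varepsilon$, or at least $J(R) \subseteq J(RG)$, via the relation $J(RG) \cap R = J(R)$ plus the fact that $\ker\varepsilon \subseteq J(RG)$ with $RG/\ker\varepsilon \cong R$), we get $a := \varepsilon(f) \notin J(R)$. Otherwise $f \in J(RG)$, because $J(RG) = \varepsilon^{-1}(J(R))$ when $\ker\varepsilon \subseteq J(RG)$. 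Since $R$ is generalized fine, write $a = u + t$ with $u \in U(R)$ and $t \in \mathrm{Nil}(R)$. Then $f = u + (t + \delta)$ with $u \in U(RG)$.

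The obstacle is proving that $t + \delta$ is nilpotent. I would argue locally. All the elements involved lie in $RH$ for a finite $p$-subgroup $H$, and $\Delta_H$ is a nilpotent ideal of $RH$ with $RH/\Delta_H \cong R$. The image of $t+\delta$ in $RH/\Delta_H$ is $t$, which is nilpotent, so $(t+\delta)^m \in \Delta_H$ for some $m$. Because $\Delta_H$ is a nilpotent ideal, $(t+\delta)^{mN} = 0$ for suitable $N$. Hence $t+\delta \in \mathrm{Nil}(RG)$, and $f$ is a unit plus a nilpotent, as required. The delicate point I expect to spend the most care on is the nilpotency (rather than mere nil-ness) of $\Delta_H$ for finite $H$ when $p$ is only nilpotent in $R$. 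For this I would use that $\Delta_H$ is generated by the elements $h-1$, and that $(h-1)^{p^e} \equiv h^{p^e}-1 = 0 \pmod{pRH}$. This gives nilpotency modulo the nilpotent ideal $pRH$ (nilpotent because $p$ is central and $p^k=0$), together with the standard Connell argument for finite $p$-groups.
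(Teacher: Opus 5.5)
Your proof is correct and follows the same route as the paper. The paper quotes \cite[Proposition 16]{con} to get that the augmentation ideal $\varepsilon(RG)$ is nil, and \cite[Lemma 2.10]{z} to lift generalized fineness from $RG/\varepsilon(RG)\cong R$ across that nil ideal; you re-derive both facts inline via finite subgroups $H$. (Incidentally, nil-ness of $\Delta_H$ already suffices in your final step, since $(t+\delta)^m\in\Delta_H$ is then nilpotent, so the nilpotency you single out as delicate is not actually needed.)
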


\begin{proof}
Assume \( R \) is generalized fine and \( G \) is a locally finite \( p \)-group such that \( p \in \mathrm{Nil}(R) \). Since \( RG/\varepsilon(RG) \cong R \), the factor-ring \( RG/\varepsilon(RG) \) is generalized fine. Moreover, knowing \cite[Proposition 16]{con}, we deduce that the augmentation ideal \( \varepsilon(RG) \) is a nil-ideal. Thus, \cite[Lemma 2.10]{z} is a guarantor that \( RG \) itself is generalized fine, as we want.
\end{proof}

In closing, we formulate the following challenging question, about which we conjecture that the answer is in the affirmative:

\begin{problem} Is there a \(\sqrt{\Delta}\)-fine ring which is {\it not} fine?
\end{problem}

In view of the group rings results presented in this section, especially Lemma~\ref{8} and Theorem~\ref{888}, there is a generalized fine ring that is {\it not} \(\sqrt{\Delta}\)-fine.

\medskip
\medskip

\noindent{\bf Funding:} This scientific work is mainly based upon research funded by Iran National Science Foundation (INSF) under Project no. 40502582.

\end{document}